\journal{Computers \& Mathematics with Applications}
\theoremstyle{remark}
\def\RR{\hbox{I\kern-.2em\hbox{R}}}
\numberwithin{equation}{section}
\newtheorem{Lemma}{Lemma}
\newtheorem{Th}{Theorem}
\newtheorem{Rem}{Remark}
\newtheorem{Ex}{Example}
\begin{document}  

\begin{frontmatter}

\title{Competitive-cooperative models with various diffusion strategies}
%% Other Packages %%%%%%%%%%%%%%%%%%%%%%%%%%%%%%%%%%%%%%%%%%%

\author[eb]{E.~Braverman
%\thanks{Corresponding author, e-mail maelena@ucalgary.ca, Phone 1-403-220-3956, Fax 1-282-5150}
}
\ead{maelena@math.ucalgary.ca}
\address[eb,mk]{Department of Mathematics and Statistics, University of Calgary,
2500 University Drive N. W.,\\ Calgary, AB T2N 1N4, Canada}
\author[mk]{Md. Kamrujjaman}
%\address[mk]{Department of Mathematics and Statistics, University of Calgary,
%2500 University Drive N. W.,\\ Calgary, AB T2N 1N4, Canada}

\begin{abstract}
The paper is concerned with different types of dispersal chosen by competing 
species. We introduce a model with the diffusion-type term  
$\nabla \cdot \left[ a \nabla \left( u/P \right) \right]$ which includes some 
previously studied systems as special cases, where a positive space-dependent function $P$ can be interpreted as a 
chosen dispersal strategy. The well-known result that if 
the first species chooses $P$ proportional to the carrying capacity while
the second does not then the first species will bring the second one to extinction,
is also valid for this type of dispersal. However, we focus on the case when 
the ideal free distribution is attained as a combination of the two strategies adopted by the two species.
Then there is a globally stable coexistence equilibrium, its uniqueness is justified.  
If both species choose the same dispersal strategy, non-proportional to the carrying capacity,
then the influence of higher diffusion rates is negative, while of higher intrinsic growth rates is 
positive for survival in a competition. This extends the result of  
[J. Math. Biol. {\bf 37} (1) (1998), 61--83] for the regular diffusion 
to a more general type of dispersal.

\begin{keyword} ideal free distribution \sep ideal free pair \sep dispersal strategy \sep 
global attractivity \sep system of partial differential equations \sep coexistence

\MSC[2010] 92D25, 35K57 (primary), 35K50, 37N25 
\end{keyword}
\end{abstract}
\end{frontmatter}

%{\bf Keywords:}
%ideal free distribution; ideal free pair;
%dispersal strategy; global attractivity; system of partial differential equations; coexistence; 

%{\bf AMS  subject classification:} 92D25, 35K57 (primary), 35K50, 37N25

\section{Introduction}

The study of population dynamics started with the description of the total population size and its dynamics.
However, taking into account the spatial structure is essential for understanding of invasion, survival and 
extinction of species. In order to involve movements in spatially distributed systems, the dispersal strategy
should be specified. The ideas of spreading over the domain to avoid overcrowding combined with advection towards 
higher available resources were incorporated in the model in \cite{Averill,C4,C2,C3,C5}.
Dispersal design in \cite{C2} was based on the notion of the ideal free distribution, i.e. such distribution that 
any movement in an ideally distributed system will decrease the fitness of moving individuals.  
An ideal free distribution in a temporally constant but spatially heterogeneous environment
is expected to be a solution of the system. There were several possible approaches to model dispersal
in such a way that the ideal free distribution is a stationary solution of the equation, see, for example,
\cite{Brav,C2,C3,C5}.
 
Most of the previous research \cite{C2,C3,C5,Dock,Kor2,Kor4} was focused on evolutionarily stable strategies
which provide advantages in a competition.
This allowed to answer the question: what parameters or strategies should a spatially distributed population choose 
so that its habitat cannot be invaded by a species choosing alternative strategy and having other parameters?
The effect of environment heterogeneity was studied in \cite{He}. 
The case when a preferred diffusion strategy alleviated a negative effect of less efficient resources exploitation
leading to possible coexistence was recently studied in \cite{Brav1}.
However, the question how spatial distribution can promote coexistence compared to homogeneous environment got much less attention. 
Coexistence of various species relying on the same resources is quite common in nature. 
Survival of a certain population can be secured if it finds a certain niche in the resource consumption.
Adaptation of different parts of the habitat (for example, shallow waters and deep waters) by two (or more)
competing for resources populations can lead to coexistence.  

The purpose of this paper is two-fold.
\begin{enumerate}
\item
The first goal is to unify different approaches to diffusion strategies and dispersal, for example,
the advection to better environments in \cite{C4} and the model where not the population density
but its ratio to locally available resources diffuses \cite{Brav}. 
A type of dispersal which includes \cite{Brav,C4} is introduced in Section~\ref{sec:dispersal}.
\item
The second goal is to consider the case when dispersal strategies guarantee coexistence.
If the carrying capacity of the environment is a combination of these strategies, the ideal free distribution
is attained at the coexistence stationary solution where the first population can be more abundant
in some areas of the habitat, while the second one can be dense in other areas. Altogether, the two populations 
use the resources in an optimal way, and the sum of two densities coincides with the carrying capacity
at any place of the spatial domain. This coexistence equilibrium is unique and globally attractive, see
Section~\ref{sec:directed}.
\end{enumerate}

In addition, we extend the results of \cite{Dock} to the case when the diffusion is not necessarily regular but
the diffusion strategy is the same for both species, and it is different from \cite{Brav} where, as the diffusion
coefficient tends to infinity, the solution tends to the carrying capacity which coincides with the ideal free distribution.
If this is the case, higher diffusion coefficient leads to a disadvantage in a competition. However,
Section~\ref{sec:influence} also gives an alternative interpretation to this result: higher intrinsic growth rates
(assuming the two have the same dispersal strategy) give an evolutionary advantage.
Finally, Section~\ref{sec:discussion} includes discussion of the results obtained and states some open questions.

\section{Dispersal Modeling}
\label{sec:dispersal}

When choosing the type of diffusion, we focus on the ideal free distribution which is expected to be a solution of
the equation, in the absence of other species. A related idea is that there is a movement towards higher available 
resources, not just lower densities, as the regular diffusion $\Delta u$ would suggest.
For our modeling, we assume the null hypothesis that not $u$ but $u/P$ is subject to advection or diffusion, where 
$P$ is a diffusion strategy chosen by a species whose density at time $t$ and point $x$ is $u(t,x)$. Here $P$ is a positive 
and smooth in the domain $\Omega$ function.
Assuming space-dependent rate of advection $a(x)$, we consider
\begin{equation}
\label{distrib}
\nabla \cdot \left[ a(x) \nabla \left( \frac{u(t,x)}{P(x)} \right) \right]
= \nabla \cdot \left[ \frac{a(x)}{P(x)} \left( \nabla u(t,x)- u(t,x) \frac{\nabla P}{P} \right) \right].
\end{equation}
The space-distributed $P$ is treated as a diffusion strategy in the following sense: if $a(x) \to +\infty$,
the density $u(t,x)$ would be proportional to $P$, whatever growth law we choose. 
Accepting \eqref{distrib} as a dispersal model, we consider the following system of two competing species obeying 
the logistic growth rule, with the Neumann boundary conditions
\begin{align}\label{ssystem}
\begin{cases}
\frac{\displaystyle \partial u}{\displaystyle \partial t}  
= \nabla \cdot \left[ \frac{\displaystyle a_1(x)}{\displaystyle P(x)} \left( \nabla u(t,x)- u(t,x) \frac{\displaystyle 
\nabla P}{\displaystyle P} \right) \right]
+r(x)u(t,x)\left(1-\frac{\displaystyle u(t,x)+ v(t,x)}{\displaystyle K(x)}\right), \vspace{3mm}
\\
\frac{\displaystyle \partial v}{\displaystyle \partial t} 
= \nabla \cdot \left[ \frac{\displaystyle a_2(x)}{\displaystyle Q(x)} \left( \nabla v(t,x)- v(t,x) \frac{\nabla Q}{Q} 
\right) \right] + r(x)v(t,x)\left(1-\frac{\displaystyle  u(t,x)+v(t,x) }
{\displaystyle K(x)}\right),\\
t>0,\;x\in{\Omega},\\
\frac{\displaystyle \partial u}{\displaystyle \partial n} - \frac{\displaystyle u}{\displaystyle P} \frac{\displaystyle 
\partial P}{\displaystyle \partial n}  
=\frac{\displaystyle \partial v}{\displaystyle \partial n}  - 
\frac{\displaystyle v}{\displaystyle Q} \frac{\displaystyle \partial Q}{\displaystyle \partial n} =0,~x\in 
{\partial}{\Omega},\\
u(0,x)=u_0(x), \;v(0,x)=v_0(x),\;x\in{\Omega}.
\end{cases}
\end{align}
Let us note that, for smooth positive $P$ and $Q$, the boundary conditions in \eqref{ssystem}
are equivalent to 
\begin{equation}
\label{boundary}
\frac{\displaystyle \partial }{\displaystyle \partial n} \left( \frac{\displaystyle u}{\displaystyle P} \right)=0, ~
\frac{\displaystyle \partial }{\displaystyle \partial n} \left( \frac{\displaystyle v}{\displaystyle Q} \right)=0, ~
~x\in {\partial}{\Omega}.
\end{equation}

The three most common particular cases are outlined below:
\begin{enumerate}
\item
If either both $P$ and $a_1$ or both $Q$ and $a_2$ are constant, then either the first or the second equation
incorporates a regular diffusion term $d_1 \Delta u$ or $d_2 \Delta v$, where $d_1=a_1/P$ or $d_2=a_2/Q$. 
\item
If  $a_1$ is space-independent, in the first equation of \eqref{ssystem} we obtain the type of dispersal 
$\Delta(u/P)$. In the particular case when $P \equiv K$ (or $P$ is proportional to $K$) we have the term 
$\Delta(u/K)$, first introduced in \cite{Brav} and later considered in \cite{Kor1}-\cite{Kor3}.
%\cite{Kor1,Kor2,Kor4,Kor3}. 
If $Q$ is constant, while $a_2$ is proportional to $1/K$, we have the dispersal type $\nabla \cdot (\frac{1}{K}\nabla v)$ 
which was considered in \cite{Kor2,Kor4}.
\item
If $a_1 =\mu_1 P$, $\ln P =\mu_2 K$, 
where $\mu_i$, $i=1,2$ are space-independent, 
and $r=K$, we 
obtain the directed advection model of the type
\begin{equation}
\label{CC_equation}
\frac{\partial u}{\partial t} = 
\nabla \cdot \left[ \mu \nabla u - \alpha u \nabla K \right] + u(K-u),
\end{equation}
where $\mu=\mu_1$, $\alpha=\mu_1 \mu_2$. Equation \eqref{CC_equation} was
considered in \cite{Averill,C4,C2,C3,C5}, see also references therein.
\end{enumerate}

Thus, \eqref{ssystem} generalizes most of earlier considered dispersal strategies. However, it should be mentioned
that, for constant $a_1/P$, there are publications where $P$ is not assumed to be positive everywhere on $\Omega$.

\section{Directed Diffusion Competition Model}
\label{sec:directed}

%Next, let us introduce conditions on the parameters of \eqref{ssystem} and consider possible equilibrium solutions.
We assume that the domain $\Omega$ is an open bounded region 
in $\mathbb{R}^n$ with ${\partial}{\Omega}\in C^{2+\beta}$, 
$\beta>0$, the functions
$r(x)$, $a_i(x)$, $i=1,2$, $P(x)$, $Q(x)$ and $K(x)$ are continuous and positive on $\overline{\Omega}$;
moreover, $a, P, Q \in C^2 (\Omega)$.

Next, let us proceed to the study of stationary solutions (positive equilibria) of \eqref{ssystem}.
Problem \eqref{ssystem} is a monotone dynamical system \cite{CC,Pao,Hs}. If all equilibrium solutions but one are unstable, we would 
be able to conclude that the remaining equilibrium is globally asymptotically stable. We start with the trivial 
equilibrium, and, similarly to \cite{Kor2,Kor4}, verify the following result.

\begin{Lemma}
\label{zero_equilibrium}
The zero solution of \eqref{ssystem} is unstable; moreover, it is a repeller.
\end{Lemma}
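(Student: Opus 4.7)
The plan is to linearize the system at the trivial solution, reduce each linearized equation to a self-adjoint weighted eigenvalue problem by the substitution used implicitly in the boundary conditions \eqref{boundary}, and show that the principal eigenvalues are strictly positive. Linear instability together with the principal eigenfunction will then be used to construct a subsolution that drives small positive initial data away from zero, which yields the repeller property.

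First I would linearize \eqref{ssystem} at $(0,0)$. Since the reaction nonlinearity vanishes quadratically, the linearization decouples into
\[
u_t = \nabla\cdot\!\left[\tfrac{a_1}{P}\bigl(\nabla u - u\tfrac{\nabla P}{P}\bigr)\right] + r(x)u,\qquad
v_t = \nabla\cdot\!\left[\tfrac{a_2}{Q}\bigl(\nabla v - v\tfrac{\nabla Q}{Q}\bigr)\right] + r(x)v,
\]
together with the boundary conditions \eqref{boundary}. Setting $u = P\tilde u$ (so that $\tilde u = u/P$ automatically satisfies the usual Neumann condition) and using $\nabla(u/P) = \nabla\tilde u$, the corresponding stationary eigenvalue problem
\[
\nabla\cdot[a_1\nabla\tilde u] + r P\,\tilde u = \lambda P\,\tilde u\ \text{ in }\Omega,\qquad
\tfrac{\partial \tilde u}{\partial n} = 0\ \text{ on }\partial\Omega,
\]
is self-adjoint with weight $P$. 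An identical reduction applies to the $v$-equation with $P,a_1,Q$ replaced by $Q,a_2$.

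Next I would invoke the Rayleigh characterization of the principal eigenvalue,
\[
\lambda_1 = \sup_{\tilde u\in H^1(\Omega),\,\tilde u\not\equiv 0}\frac{-\int_\Omega a_1|\nabla\tilde u|^2 + \int_\Omega rP\,\tilde u^2}{\int_\Omega P\,\tilde u^2}.
\]
Testing with the constant function $\tilde u\equiv 1$ (which is admissible since the Neumann condition is satisfied) yields $\lambda_1 \geq \int_\Omega rP\bigm/\int_\Omega P > 0$ because $r$ and $P$ are positive on $\overline\Omega$; the analogous estimate holds for the $v$-problem. Hence the linearization of \eqref{ssystem} at $(0,0)$ has strictly positive principal eigenvalues with positive principal eigenfunctions $\phi_1=P\tilde\phi_1>0$ and $\psi_1=Q\tilde\psi_1>0$.

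To upgrade linear instability to the repeller property, I would use standard sub/super-solution and comparison arguments for the monotone (competitive) system. Assume without loss of generality $u_0\not\equiv 0$ (the other case is symmetric). For $\varepsilon>0$ small, $\varepsilon\phi_1$ is a strict subsolution of the $u$-equation as long as both $u$ and $v$ remain below a threshold $\delta$ determined by $\lambda_1$, $\|K^{-1}\|_\infty$ and $\|r\|_\infty$, since the omitted logistic terms are $O((u+v)^2/K)$. By the maximum principle for the scalar parabolic operator above, the solution satisfies $u(t,\cdot)\geq \varepsilon\phi_1\,e^{\lambda_1 t/2}$ as long as it remains in the $\delta$-neighborhood of $0$. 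This forces the trajectory to exit any sufficiently small neighborhood of $(0,0)$ in finite time, which is precisely the repeller property; instability follows a fortiori.

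The main technical hurdle is the last step: the sub-solution comparison must be carried out in a neighborhood where the nonlinear and cross terms remain dominated by the linear growth, which requires handling the boundary condition \eqref{boundary} carefully under the change of variables $u=P\tilde u$ and verifying that comparison applies to the resulting weighted operator. Once that is set, the positivity of $\lambda_1$ obtained from the simple constant test function does the rest.
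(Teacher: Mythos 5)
Your argument is correct and is essentially the standard route this paper relies on: the paper itself omits the proof of Lemma~\ref{zero_equilibrium}, deferring to \cite{Kor2,Kor4}, where instability of the trivial state is obtained exactly as you do, via the weighted self-adjoint eigenvalue problem (equivalently, the Rayleigh quotient with $\phi/P$ used throughout Section~\ref{sec:directed}), a positive lower bound on the principal eigenvalue from a simple test function, and a comparison/subsolution argument in a small neighborhood of zero to upgrade linear instability to the repeller property within the monotone-systems framework. No substantive gap; the remaining details you flag (smallness of the neighborhood and comparison for the weighted operator after $u=P\tilde u$) are routine.
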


Let functions $u^{\ast}$ and $v^{\ast}$ be solutions of the single-species stationary models
corresponding  to the first and the second equations in  \eqref{ssystem}
\begin{equation}\label{eq_u}
\nabla \cdot \left[ a_1(x)  \nabla \left( \frac{\displaystyle u^{\ast}(x)}{\displaystyle P(x)}\right) \right]
+r(x)u^{\ast}(x)\left(1-\frac{\displaystyle u^{\ast}(x)}{\displaystyle K(x)}\right) = 0,\; x\in\Omega,\;
\frac{\displaystyle  \partial (u^{\ast} /P)}{\displaystyle \partial n}=0,\; x\in\partial\Omega, 
\end{equation}
\begin{equation}\label{eq_v}
\nabla \cdot \left[ a_2(x) \nabla \left( \frac{\displaystyle v^{\ast}(x)}{\displaystyle Q(x)}\right) \right]
+r(x)v^{\ast}(x)\left(1-\frac{\displaystyle v^{\ast}(x)}{\displaystyle K(x)}\right) = 0,\; x\in\Omega,\;
\frac{\displaystyle  \partial (v^{\ast} /Q)}{\displaystyle \partial n}=0,\; x\in\partial\Omega ,
\end{equation}
respectively.

In future, we will need the following two auxiliary statements. 

\begin{Lemma}\label{Lsteady2}
Let $u^{\ast}$ be a positive solution of (\ref{eq_u}), then
\begin{equation}\label{eq_u2_1}
\int \limits_\Omega r(x)P(x)\left(\frac{\displaystyle u^*(x)}{\displaystyle K(x)}-1\right)\,dx=
\int \limits_\Omega \frac{a_1(x)|\nabla (u^*/P)|^2}{(u^*/P)^2}\,dx.
 \end{equation}
If $P(x)$ and $K(x)$ are linearly independent on $\Omega$, then
\begin{equation}\label{eq_u2}
\int \limits_\Omega r(x)P(x)\left(\frac{\displaystyle u^*(x)}{\displaystyle K(x)}-1\right)\,dx > 0. 
\end{equation}
\end{Lemma}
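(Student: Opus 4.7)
The plan is to reduce everything to a single substitution $w := u^\ast/P$. By the smoothness and positivity assumptions on $P$ and $u^\ast$, $w$ is a smooth, strictly positive function on $\overline{\Omega}$ satisfying the Neumann condition $\partial w/\partial n = 0$ on $\partial\Omega$. The equation \eqref{eq_u} reads
\begin{equation*}
\nabla \cdot [a_1(x)\nabla w] + r(x) P(x) w(x)\left(1 - \frac{P(x) w(x)}{K(x)}\right) = 0.
\end{equation*}

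For the identity \eqref{eq_u2_1}, I would divide this equation by $w$ (legitimate since $w>0$) and integrate over $\Omega$. Applying Green's first identity to $\int_\Omega w^{-1}\nabla\cdot[a_1\nabla w]\,dx$, the boundary term
\begin{equation*}
\int_{\partial\Omega} \frac{a_1}{w}\,\frac{\partial w}{\partial n}\,dS
\end{equation*}
vanishes by the Neumann condition, leaving $-\int_\Omega \nabla(w^{-1})\cdot a_1\nabla w\,dx = \int_\Omega a_1|\nabla w|^2/w^2\,dx$. Since $Pw = u^\ast$, the reaction term contributes $\int_\Omega r P(1 - u^\ast/K)\,dx$, and rearranging gives exactly \eqref{eq_u2_1}.

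For the strict inequality \eqref{eq_u2}, it suffices to show that $\nabla w \not\equiv 0$ on $\Omega$, because $a_1>0$. Suppose the contrary: $w \equiv c$ for some constant $c>0$, so $u^\ast = cP$. Substituting into the PDE, the divergence term vanishes and we are left with
\begin{equation*}
r(x) c P(x)\left(1 - \frac{c P(x)}{K(x)}\right) \equiv 0 \quad \text{on } \Omega.
\end{equation*}
Because $r, P > 0$, this forces $K(x) = c P(x)$ throughout $\Omega$, contradicting the assumed linear independence of $P$ and $K$. Hence $\nabla w \not\equiv 0$, and \eqref{eq_u2} follows.

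I do not anticipate a real obstacle here: the argument is the standard energy trick (test the equation against $1/w$), and the only points requiring care are the positivity of $w$ needed for the division, the justification that the Neumann condition on $u^\ast/P$ kills the boundary integral, and the observation that a constant $w$ collapses \eqref{eq_u} into a pointwise proportionality between $K$ and $P$.
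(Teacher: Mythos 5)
Your argument is correct and is essentially the paper's own proof: dividing \eqref{eq_u} by $u^{\ast}/P$ (equivalently testing against $1/w$ with $w=u^{\ast}/P$), integrating with the Neumann condition to kill the boundary term, and ruling out constant $w$ via the forced identity $cP\equiv K$, which contradicts linear independence. No substantive difference from the paper's route.
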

\begin{proof}
Since $u^{\ast}>0$ and $P(x)>0$ for any $x\in \Omega$, dividing equation (\ref{eq_u}) by $u^{\ast}/P$, we 
obtain
\begin{equation}\label{eq_u3}
\frac{\nabla \cdot [a_1 \nabla (u^{\ast}/ P)]}{(u^{\ast}/ P)}+r(x)P(x)\left(1-\frac{\displaystyle 
u^{\ast}(x)}{\displaystyle 
K(x)}\right) = 0,\; x\in\Omega,\;
\frac{\displaystyle  \partial (u^{\ast} /P)}{\displaystyle \partial n}=0,\; x\in\partial\Omega 
\end{equation} 
Integrating (\ref{eq_u3}) over the domain $\Omega$ using boundary conditions in (\ref{eq_u3}), we have
\begin{equation}\label{eq_u4}
\int \limits_\Omega \frac{a_1 |\nabla (u^{\ast}/P)|^2}{(u^{\ast}/P)^2}\,dx+\int \limits_\Omega r(x)P(x)
\left(1-\frac{\displaystyle u^{\ast}(x)}{\displaystyle K(x)}\right)\,dx=0
\end{equation}
Therefore
\begin{equation}\label{eq_u5}
\int \limits_\Omega r(x)P(x)\left(\frac{\displaystyle u^{\ast}(x)}{\displaystyle K(x)}-1\right)\,dx=
\int \limits_\Omega \frac{a_1(x)|\nabla (u^{\ast}/P)|^2}{(u^{\ast}/P)^2}\,dx> 0,
  \end{equation}
unless $u^{\ast}/P$ is identically equal to a positive constant. However, substituting
$u^{\ast}(x)=c P(x)$ into \eqref{eq_u}, we obtain $cP(x) \equiv K(x)$, $x \in \Omega$, which contradicts to our 
assumption that $P$ and $K$ are linearly independent on $\Omega$.
\end{proof}

A similar result is valid for $v^{\ast}$ whenever $Q$ and $K$ are linearly independent.

The analogues of the following statements for less general diffusion types were obtained in \cite{Kor2,Kor4},
for completeness we present the proof here.

\begin{Lemma}
\label{Lsteady1}
Suppose that $u^{\ast}$ is a positive solution of \eqref{eq_u}, while
$P(x)$ and $K(x)$ satisfy 
$\nabla \cdot [a_1(x)\nabla (K(x)/P(x))] \not\equiv 0$ on $\Omega$. 
Then
\begin{equation}\label{eq_est1abc}
\int\limits_{\Omega}r(x) K(x) \,dx>\int\limits_{\Omega} r(x) u^{\ast}(x)\,dx.
\end{equation}
\end{Lemma}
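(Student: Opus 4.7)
The plan is to reduce the inequality to a simple algebraic identity combined with the integral conservation law that comes from integrating the PDE \eqref{eq_u} over $\Omega$. The key observation is that the reaction term, integrated against the constant test function $1$, already gives a "first moment" identity for $u^{\ast}$; from there, the desired inequality falls out from a one-line algebraic rearrangement, much as the $L^2$-type argument in Lemma~\ref{Lsteady2}.

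\textbf{Step 1 (integral identity).} First, I would integrate \eqref{eq_u} over $\Omega$. By the divergence theorem,
\begin{equation*}
\int_\Omega \nabla \cdot \left[ a_1 \nabla (u^{\ast}/P) \right]\,dx
= \int_{\partial\Omega} a_1\, \frac{\partial (u^{\ast}/P)}{\partial n}\,dS = 0,
\end{equation*}
because the boundary condition in \eqref{eq_u} kills the surface integral. Hence
\begin{equation*}
\int_\Omega r(x)\,u^{\ast}(x)\!\left(1-\frac{u^{\ast}(x)}{K(x)}\right)dx = 0.
\end{equation*}

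\textbf{Step 2 (algebraic splitting).} Next, I would use the trivial identity $1 = u^{\ast}/K + (K-u^{\ast})/K$. Multiplying it by $r(K-u^{\ast})$ produces the pointwise decomposition
\begin{equation*}
r(K-u^{\ast}) \;=\; \frac{r\,u^{\ast}(K-u^{\ast})}{K} \;+\; \frac{r\,(K-u^{\ast})^2}{K}.
\end{equation*}
Integrating over $\Omega$ and invoking Step~1 (which says the first term on the right integrates to zero) gives
\begin{equation*}
\int_\Omega r(K-u^{\ast})\,dx \;=\; \int_\Omega \frac{r\,(K-u^{\ast})^2}{K}\,dx \;\ge\; 0,
\end{equation*}
which is almost the claim.

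\textbf{Step 3 (strict inequality via the hypothesis on $K/P$).} To upgrade the inequality to a strict one, note that equality would force $u^{\ast}\equiv K$ on $\Omega$ by continuity and positivity of $r/K$. Plugging $u^{\ast}=K$ back into \eqref{eq_u} collapses the reaction term and leaves $\nabla\cdot[a_1\nabla(K/P)]\equiv 0$ on $\Omega$, directly contradicting the hypothesis $\nabla\cdot[a_1\nabla(K/P)]\not\equiv 0$. Therefore the non-negative right-hand side in Step~2 is strictly positive, yielding \eqref{eq_est1abc}.

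I do not expect a real obstacle here: the only thing that must be checked carefully is that the boundary integral in Step~1 genuinely vanishes under the diffusion boundary condition for $u^{\ast}/P$, and that the implication "$u^{\ast}\equiv K$ contradicts the hypothesis" uses \eqref{eq_u} itself rather than Lemma~\ref{Lsteady2}. Once the algebraic splitting of Step~2 is in hand, the proof is essentially immediate; this is the analogue for the diffusion type $\nabla\cdot[a_1\nabla(u/P)]$ of the argument carried out in \cite{Kor2,Kor4} for more restricted dispersal operators.
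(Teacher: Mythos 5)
Your proposal is correct and follows essentially the same route as the paper: integrate \eqref{eq_u} over $\Omega$ to get $\int_\Omega r u^{\ast}(1-u^{\ast}/K)\,dx=0$, split $r(K-u^{\ast})$ into that term plus the nonnegative quantity $r(K-u^{\ast})^2/K$ (the paper writes the same identity as $ru^{\ast}(1-u^{\ast}/K)=-rK(1-u^{\ast}/K)^2+rK(1-u^{\ast}/K)$), and rule out equality because $u^{\ast}\equiv K$ would force $\nabla\cdot[a_1\nabla(K/P)]\equiv 0$, contradicting the hypothesis. No gaps; the boundary term indeed vanishes by $\partial(u^{\ast}/P)/\partial n=0$.
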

\begin{proof}
Substituting $u=u^{\ast}$ and $v \equiv 0$ in the first equation of \eqref{ssystem} and integrating 
over $\Omega$, using the boundary conditions gives
$$0=\int_{\Omega} r u^{\ast} \left(1-\frac{\displaystyle u^{\ast}}{\displaystyle K}\right)~dx=
- \int_{\Omega} r K \left(1-\frac{\displaystyle u^{\ast}}{\displaystyle K}\right)^2 dx
+\int_{\Omega} r K \left(1-\frac{\displaystyle u^{\ast}}{\displaystyle K}\right)~dx.
$$
Since the first integral in the right-hand side is non-positive, the second integral is non-negative. Moreover, 
it is positive unless $u^{\ast} \equiv K$ which would imply 
$\nabla \cdot [a_1(x)\nabla (K(x)/P(x))] \equiv 0$ on $\Omega$. 
The contradiction justifies inequality \eqref{eq_est1abc}. 
\end{proof}

Also, if 
$\nabla \cdot [a_2(x)\nabla (K(x)/Q(x))] \not\equiv 0$ on $\Omega$, we have 
 \begin{equation}\label{eq_v1}
 \int \limits_\Omega r(x) K(x)\left(1-\frac{\displaystyle v^{\ast}(x)}{\displaystyle 
K(x)}\right)\,dx>0.
 \end{equation}

Similarly to Lemma~\ref{Lsteady1}, the following result is justified.

\begin{Lemma}
\label{Lsteady1abc}
Suppose that $(u_s,v_s)$ is a positive stationary solution of \eqref{ssystem}, such that
$u_s(x)+v_s(x) \not\equiv K(x)$.
Then
\begin{equation}\label{eq_est1}
\int\limits_{\Omega}r(x) K(x)\left( 1 -\frac{u_s(x)+v_s(x)}{K(x)} \right) \,dx> 0.
\end{equation}
\end{Lemma}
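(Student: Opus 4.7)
The plan is to mimic closely the proof of the previous lemma (Lemma~\ref{Lsteady1}), but now applied to the sum $w := u_s+v_s$ rather than to the single-species solution. The key new ingredient is that we first collapse the coupled stationary system into a single integral identity for $w$.

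First, I would use the stationary versions of both equations in \eqref{ssystem}:
\begin{equation*}
\nabla \cdot \left[ \tfrac{a_1}{P}\left(\nabla u_s - u_s \tfrac{\nabla P}{P}\right)\right] + r u_s\left(1-\tfrac{u_s+v_s}{K}\right)=0, \qquad
\nabla \cdot \left[ \tfrac{a_2}{Q}\left(\nabla v_s - v_s \tfrac{\nabla Q}{Q}\right)\right] + r v_s\left(1-\tfrac{u_s+v_s}{K}\right)=0.
\end{equation*}
Adding these, integrating over $\Omega$, and invoking the boundary conditions from \eqref{ssystem} (equivalently \eqref{boundary}), the divergence terms vanish by the divergence theorem. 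This yields the single scalar identity
\begin{equation*}
\int_\Omega r(x)\,(u_s+v_s)\left(1-\tfrac{u_s+v_s}{K}\right) dx = 0.
\end{equation*}

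Next, exactly as in the proof of Lemma~\ref{Lsteady1}, I would write $w(1-w/K) = K \cdot (w/K)(1-w/K) = K(1-w/K) - K(1-w/K)^2$ (with $w = u_s+v_s$), multiply by $r$, and integrate. This gives
\begin{equation*}
0 = \int_\Omega r K\left(1-\tfrac{u_s+v_s}{K}\right) dx - \int_\Omega r K\left(1-\tfrac{u_s+v_s}{K}\right)^2 dx,
\end{equation*}
so that
\begin{equation*}
\int_\Omega r K \left(1-\tfrac{u_s+v_s}{K}\right) dx = \int_\Omega r K\left(1-\tfrac{u_s+v_s}{K}\right)^2 dx \geq 0.
\end{equation*}
Since $r, K > 0$ on $\overline{\Omega}$ and, by assumption, $u_s+v_s \not\equiv K$, the right-hand integrand is non-negative and not identically zero, giving strict positivity and hence \eqref{eq_est1}.

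The only point requiring minor care is the first step: in \eqref{ssystem} the flux actually has the form $\tfrac{a_i}{P}(\nabla u_s - u_s \nabla P/P) = a_i\nabla(u_s/P)$ (and similarly for $v_s$), so the Neumann-type boundary condition $\partial(u_s/P)/\partial n = \partial(v_s/Q)/\partial n = 0$ indeed makes the boundary flux vanish upon integration. Beyond that, the argument is an immediate adaptation of the single-species identity, so no essential obstacle is expected.
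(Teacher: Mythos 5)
Your proof is correct and follows essentially the same route as the paper: the authors justify this lemma "similarly to Lemma~\ref{Lsteady1}", and indeed the identical computation (adding the two stationary equations, integrating with the Neumann-type boundary conditions, and using the decomposition $r(u_s+v_s)\bigl(1-\tfrac{u_s+v_s}{K}\bigr)=rK\bigl(1-\tfrac{u_s+v_s}{K}\bigr)-rK\bigl(1-\tfrac{u_s+v_s}{K}\bigr)^2$) appears explicitly as \eqref{s1_ceq2}--\eqref{c5_e1} in the proof of Lemma~\ref{coex_r1}. No gaps; your remark about the flux form $a_i\nabla(u_s/P)$ making the boundary terms vanish is exactly the point the paper relies on.
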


\begin{Lemma}\label{semi_p1}
Suppose that $P$, $K$ and $Q$, $K$ are two pairs of linearly independent on $\Omega$ functions,
while $K(x)\equiv \alpha P+\beta Q$ for some $\alpha>0$, $\beta>0$.
Then the semi-trivial steady state $(u^{\ast}(x),0)$ of (\ref{ssystem}) is unstable.
\end{Lemma}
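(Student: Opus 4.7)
The plan is to prove that the principal eigenvalue $\lambda_1$ of the linearization of the second equation of \eqref{ssystem} at $(u^{\ast},0)$ (obtained by evaluating the logistic coupling at $v=0$) is strictly positive, which implies instability of the semi-trivial state. The relevant eigenvalue problem is
\begin{equation*}
\nabla\cdot\left[\frac{a_2}{Q}\left(\nabla\phi-\phi\frac{\nabla Q}{Q}\right)\right] + r(x)\phi\left(1-\frac{u^{\ast}(x)}{K(x)}\right)=\lambda\phi, \quad \frac{\partial(\phi/Q)}{\partial n}=0 \text{ on }\partial\Omega.
\end{equation*}

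To access a variational characterization, I would substitute $\phi=Q\psi$; using the identity $\nabla\phi-\phi\nabla Q/Q=Q\nabla\psi$ converts the problem into the self-adjoint weighted form
\begin{equation*}
\nabla\cdot(a_2\nabla\psi)+rQ\left(1-\frac{u^{\ast}}{K}\right)\psi=\lambda Q\psi,\quad \frac{\partial\psi}{\partial n}=0 \text{ on }\partial\Omega,
\end{equation*}
with weight $Q>0$. Then $\lambda_1$ equals the supremum of the Rayleigh quotient
$\big(\int_\Omega[-a_2|\nabla\psi|^2+rQ(1-u^{\ast}/K)\psi^2]\,dx\big)/\big(\int_\Omega Q\psi^2\,dx\big)$
over admissible $\psi$. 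Testing with $\psi\equiv 1$ (which satisfies the Neumann condition and kills the Dirichlet term) reduces the whole claim to the single inequality
\begin{equation*}
\int_\Omega r(x)Q(x)\left(1-\frac{u^{\ast}(x)}{K(x)}\right)dx>0.
\end{equation*}

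Using $Q=(K-\alpha P)/\beta$ from the hypothesis $K=\alpha P+\beta Q$, this target integral splits as $\frac{1}{\beta}\int_\Omega r(K-u^{\ast})\,dx + \frac{\alpha}{\beta}\int_\Omega rP\bigl(u^{\ast}/K-1\bigr)dx$. The second summand is strictly positive by Lemma~\ref{Lsteady2} applied to the linearly independent pair $(P,K)$. For the first summand, integrating \eqref{eq_u} over $\Omega$ and using the boundary condition gives $\int_\Omega ru^{\ast}\,dx=\int_\Omega r(u^{\ast})^2/K\,dx$, and Cauchy--Schwarz in $L^2(r\,dx)$ applied to $\sqrt{K}$ and $u^{\ast}/\sqrt{K}$ yields $\int_\Omega ru^{\ast}\,dx\le\int_\Omega rK\,dx$; since $\alpha,\beta>0$, the sum is strictly positive, finishing the argument.

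The main technical obstacle is the first step: the linearized operator is not self-adjoint as written, so one must justify the existence and variational characterization of a principal eigenvalue. The substitution $\phi=Q\psi$ is precisely what converts it to a standard weighted Neumann eigenvalue problem, after which the combination of the decomposition $K=\alpha P+\beta Q$ with the two integral lemmas already at hand produces the sign needed for instability.
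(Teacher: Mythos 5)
Your proposal is correct and follows essentially the same route as the paper's proof: linearize the second equation at $(u^{\ast},0)$, use the variational (Rayleigh-quotient) characterization of the principal eigenvalue, test with a multiple of $Q$ (your $\psi\equiv 1$ after the substitution $\phi=Q\psi$ is the paper's $\psi=\sqrt{\beta}\,Q$), and split $\beta Q=K-\alpha P$ so that Lemma~\ref{Lsteady2} gives the strictly positive term while the integrated equation for $u^{\ast}$ gives $\int_\Omega r(K-u^{\ast})\,dx\ge 0$. Your only departures are cosmetic: you make the weighted self-adjoint form explicit via $\phi=Q\psi$ where the paper cites \cite{CC}, and you obtain $\int_\Omega r u^{\ast}\,dx\le\int_\Omega rK\,dx$ by Cauchy--Schwarz instead of the square-completion identity in Lemma~\ref{Lsteady1}; both are equivalent.
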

\begin{proof} 
Consider the eigenvalue problem  associated with the second equation in (\ref{ssystem})
around the equilibrium $(u^{\ast}(x),0)$ 
 \begin{equation}\label{eig_p1}
\nabla \cdot \left[ a_2(x) \nabla \left(\frac{\displaystyle \psi(x) }{\displaystyle Q(x)}\right) \right]
+r(x)\psi(x)  \left(1-\frac{\displaystyle u^{\ast}(x)}{\displaystyle K(x)}\right)=\sigma \psi(x),\; x\in \Omega,\;
 \frac{\displaystyle \partial (\psi/Q)}{\displaystyle \partial n}=0,\; x\in\partial\Omega
\end{equation}
The principal eigenvalue of (\ref{eig_p1}) is defined as \cite{CC}
$$
\sigma_1 =
\sup_{\psi \neq 0, \psi\in W^{1,2}} \left. \left[
-\int \limits_\Omega a_2 |\nabla (\psi/Q)|^2\,dx
+\int \limits_\Omega r(x) \frac{\psi^2}{Q} \left(1-\frac{\displaystyle u^{\ast}}{\displaystyle K}\right)\,dx\right]
\right/
\int \limits_\Omega \frac{\psi^2}{Q}\,dx.
$$
 
Choosing $\psi(x)=\sqrt{\beta} Q(x)$ and denoting
$\displaystyle
M:=\int \limits_\Omega \beta Q(x) \,dx ,
$
we observe that the principal eigenvalue is not less than
\begin{align}\label{peig_p1}
\sigma_1 &\geq \frac{1}{M} \int \limits_\Omega r(x)\beta Q(x)\left(1-\frac{u^{\ast}(x)}{K(x)}\right) 
\,dx\nonumber\\
&=  \frac{1}{M} \int \limits_\Omega r(x)(K(x)-\alpha P(x))\left(1-\frac{u^{\ast}(x)}{K(x)}\right) 
\,dx\nonumber\\
&= \frac{1}{M} \int \limits_\Omega r(x)K(x)\left(1-\frac{u^{\ast}(x)}{K(x)}\right) \,dx
+\frac{\alpha}{M} \int \limits_\Omega r(x) P(x)\left(\frac{u^{\ast}(x)}{K(x)}-1\right) \,dx >0,\nonumber
\end{align}
since the first term is non-negative by Lemma~\ref{Lsteady1}, while the second is positive by 
Lemma~\ref{Lsteady2}.
Therefore, $\sigma_1$ is positive, and 
the semi-trivial steady state $(u^{\ast}(x),0)$ of (\ref{ssystem}) is unstable.
\end{proof}

Similarly, we obtain that, under the assumptions of Lemma~\ref{semi_p1}, $(0,v^{\ast}(x))$ is also unstable.

\begin{Lemma}\label{semi_p1abc}
Suppose that $P\equiv K$, while $K$ and $Q$ are linearly independent on $\Omega$.
Then the semi-trivial steady state $(0,v^{\ast}(x))$ of \eqref{ssystem} is unstable.
\end{Lemma}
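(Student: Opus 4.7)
The plan is to closely mirror the proof of Lemma~\ref{semi_p1}, but now linearize the \emph{first} equation of \eqref{ssystem} around $(0,v^{\ast}(x))$. This produces the eigenvalue problem
\begin{equation*}
\nabla\cdot\!\left[a_1(x)\nabla\!\left(\frac{\psi(x)}{P(x)}\right)\right]+r(x)\psi(x)\left(1-\frac{v^{\ast}(x)}{K(x)}\right)=\sigma\,\psi(x),\quad x\in\Omega,
\end{equation*}
together with $\partial(\psi/P)/\partial n=0$ on $\partial\Omega$, whose principal eigenvalue $\sigma_1$ admits the Rayleigh-quotient representation analogous to the one used in the proof of Lemma~\ref{semi_p1}. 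Proving $\sigma_1>0$ will force instability of $(0,v^{\ast})$.

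The key move---the counterpart of the choice $\psi=\sqrt{\beta}\,Q$ in Lemma~\ref{semi_p1}---is to test with $\psi(x)=P(x)$. Since $P\equiv K$ by assumption, $\psi/P\equiv 1$, so $\nabla(\psi/P)\equiv 0$ and the Dirichlet-energy term in the Rayleigh quotient vanishes identically. The numerator then collapses to
\begin{equation*}
\int_{\Omega} r(x)K(x)\!\left(1-\frac{v^{\ast}(x)}{K(x)}\right)\,dx,
\end{equation*}
which is precisely the quantity whose positivity is asserted in \eqref{eq_v1}.

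The remaining---and, I expect, main---obstacle is to verify that this integral is strictly positive under our hypotheses. The $v^{\ast}$-analog of Lemma~\ref{Lsteady1} requires $\nabla\cdot[a_2\nabla(K/Q)]\not\equiv 0$, whereas in our setting we assume only that $K$ and $Q$ are linearly independent. The bridge is a short argument: if $v^{\ast}\equiv K$, then substitution into \eqref{eq_v} forces both $\nabla\cdot[a_2\nabla(K/Q)]\equiv 0$ in $\Omega$ and $\partial(K/Q)/\partial n=0$ on $\partial\Omega$; multiplying by $K/Q$ and integrating by parts yields $\int_{\Omega} a_2\,|\nabla(K/Q)|^2\,dx=0$, forcing $K/Q$ to be constant and contradicting the assumed linear independence. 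Hence $v^{\ast}\not\equiv K$, the $v^{\ast}$-analog of Lemma~\ref{Lsteady1} applies, the numerator above is strictly positive, $\sigma_1>0$, and the semi-trivial equilibrium $(0,v^{\ast})$ is unstable.
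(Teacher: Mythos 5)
Your proof is correct and follows exactly the scheme the paper uses for the companion results (Lemma~\ref{semi_p1} and the instability arguments of Section~\ref{sec:influence}): linearize the first equation of \eqref{ssystem} at $(0,v^{\ast})$, invoke the variational characterization of the principal eigenvalue, and test with a multiple of $P\equiv K$ so that the gradient term vanishes and the numerator reduces to $\int_\Omega rK\left(1-v^{\ast}/K\right)dx$, which is positive by the $v^{\ast}$-analogue of Lemma~\ref{Lsteady1}, i.e.\ \eqref{eq_v1}. Your bridging step---showing that $v^{\ast}\equiv K$ would force $\nabla\cdot[a_2\nabla(K/Q)]\equiv 0$ in $\Omega$ together with $\partial(K/Q)/\partial n=0$ on $\partial\Omega$, hence $K/Q$ constant, contradicting the linear independence of $K$ and $Q$---is precisely the detail the paper leaves implicit when passing from the hypothesis $\nabla\cdot[a_2\nabla(K/Q)]\not\equiv 0$ to the stated assumption of linear independence, and it closes that gap correctly.
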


\begin{Lemma}\label{coex_r1}
Assume that $P(x)$ and $Q(x)$ are linearly independent on $\Omega$, and $K(x)\equiv \alpha P+\beta Q$, with $\alpha>0$, $\beta>0$.
Then the system (\ref{ssystem}) has a unique positive coexistence equilibrium  $(u_s,v_s)\equiv (\alpha P(x),\beta Q(x))$.
\end{Lemma}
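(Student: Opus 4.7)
The plan is to verify existence of $(\alpha P,\beta Q)$ by direct substitution and then establish uniqueness in two stages: first reduce to showing that any positive equilibrium $(\tilde u,\tilde v)$ realizes the ideal free distribution $\tilde u+\tilde v\equiv K$, and then conclude by the linear independence of $P$ and $Q$.

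Existence is immediate. Substituting $u_s=\alpha P$, $v_s=\beta Q$ into the stationary version of \eqref{ssystem}, the ratios $u_s/P\equiv\alpha$ and $v_s/Q\equiv\beta$ are constants, so both diffusion operators $\nabla\cdot[a_1\nabla(u_s/P)]$ and $\nabla\cdot[a_2\nabla(v_s/Q)]$ vanish identically; the identity $u_s+v_s=\alpha P+\beta Q\equiv K$ annihilates the reaction terms; and the Neumann conditions hold trivially since $\partial\alpha/\partial n=\partial\beta/\partial n=0$. Positivity follows from $\alpha,\beta,P,Q>0$.

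For the reduction, let $(\tilde u,\tilde v)$ be any positive equilibrium and set $\tilde S:=\tilde u+\tilde v$. Conditional on $\tilde S\equiv K$, the reaction terms vanish and the stationary equations collapse to $\nabla\cdot[a_1\nabla(\tilde u/P)]=0$ and $\nabla\cdot[a_2\nabla(\tilde v/Q)]=0$ in $\Omega$ with Neumann boundary data. Testing the first equation against $\tilde u/P$ and integrating by parts yields $\int_\Omega a_1|\nabla(\tilde u/P)|^2\,dx=0$; since $a_1>0$ on the connected domain $\overline{\Omega}$, this forces $\tilde u/P\equiv c_1$ for some positive constant, and likewise $\tilde v/Q\equiv c_2>0$. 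Then $c_1P+c_2Q\equiv\tilde S\equiv K=\alpha P+\beta Q$, and the linear independence of $P$ and $Q$ yields $c_1=\alpha$, $c_2=\beta$, i.e., $(\tilde u,\tilde v)=(\alpha P,\beta Q)$.

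The principal obstacle is establishing $\tilde S\equiv K$ for every positive equilibrium. Dividing the first stationary equation by $\tilde u/P>0$ and integrating as in Lemma~\ref{Lsteady2} gives $\int_\Omega rP(1-\tilde S/K)\,dx\ge 0$; the analogous manipulation of the second gives $\int_\Omega rQ(1-\tilde S/K)\,dx\ge 0$. Since $K=\alpha P+\beta Q$ with $\alpha,\beta>0$, combining these yields $\int_\Omega rK(1-\tilde S/K)\,dx\ge 0$, which by Lemma~\ref{Lsteady1abc} is strict unless $\tilde S\equiv K$. These weighted constraints are individually consistent with $\tilde S\not\equiv K$, so the argument cannot be closed by integral identities alone. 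To exclude other positive equilibria I would invoke the monotone dynamical system structure of the competitive system \eqref{ssystem}: the extinction state is a repeller (Lemma~\ref{zero_equilibrium}) and both semi-trivial equilibria are unstable (Lemma~\ref{semi_p1} together with its symmetric analogue obtained by swapping $(u,P,a_1,\alpha)\leftrightarrow(v,Q,a_2,\beta)$). Moreover, $(\alpha P,\beta Q)$ is simultaneously a coupled upper-lower and lower-upper solution, since every differential expression in \eqref{ssystem} vanishes identically there, so a monotone-iteration/sweeping argument squeezes the minimal and maximal positive equilibria together at $(\alpha P,\beta Q)$, yielding uniqueness and, a posteriori, the pointwise identity $\tilde S\equiv K$.
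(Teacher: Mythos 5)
Your existence step and your conditional reduction (if $\tilde u+\tilde v\equiv K$ then the diffusion equations force $\tilde u/P$ and $\tilde v/Q$ to be constants, and linear independence of $P,Q$ pins the constants to $\alpha,\beta$) are correct and agree with the paper, which uses the maximum principle where you use an energy identity. The genuine gap is in the central step, excluding positive equilibria with $\tilde S:=\tilde u+\tilde v\not\equiv K$. First, your integral inequalities have the wrong sign: dividing the first stationary equation by $\tilde u/P>0$ and integrating gives, exactly as in \eqref{eq_u2_1} and \eqref{c5_e2}, $\int_\Omega rP\bigl(\tilde S/K-1\bigr)dx=\int_\Omega a_1|\nabla(\tilde u/P)|^2/(\tilde u/P)^2dx\ge 0$, i.e.\ $\int_\Omega rP\bigl(1-\tilde S/K\bigr)dx\le 0$, not $\ge 0$, and likewise for the $Q$-weighted integral. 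Ironically, with the correct sign your route closes immediately: $\alpha\cdot(\text{first})+\beta\cdot(\text{second})$ gives $\int_\Omega rK(1-\tilde S/K)dx\le 0$, while adding the two equations and integrating gives $\int_\Omega rK(1-\tilde S/K)dx=\int_\Omega rK(1-\tilde S/K)^2dx\ge 0$ as in \eqref{c5_e1}; hence $\int_\Omega rK(1-\tilde S/K)^2dx=0$ and $\tilde S\equiv K$. So, contrary to your claim, the integral identities alone do suffice, and in fact yield an argument simpler than the paper's.

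Second, the fallback you actually propose does not work. In the monotone-dynamical-systems framework used here, instability of the trivial and semi-trivial equilibria together with \emph{uniqueness} of the coexistence equilibrium yields global stability; uniqueness is an input of that theory, not an output, so invoking the repelling origin and the unstable semi-trivial states cannot produce the uniqueness you need (this is precisely why the paper proves Lemma~\ref{coex_r1} separately before Theorem~\ref{c5_Th6}). Likewise, the observation that $(\alpha P,\beta Q)$ is its own coupled upper--lower solution is vacuous: every equilibrium has this property, and monotone iteration started at an equilibrium stays there, so no ``squeezing'' of minimal and maximal equilibria follows without further construction. The paper instead disposes of the case $\tilde S\not\equiv K$ by a principal-eigenvalue contradiction: testing the Rayleigh quotient of \eqref{c5_eig1} with $\phi=\sqrt{\alpha}P$ and using \eqref{c5_e1}, \eqref{c5_e2} gives $\sigma_1>0$, whereas $u_s$ is a positive principal eigenfunction of the same problem with eigenvalue $0$. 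Either that eigenvalue argument, or your integral argument with the signs corrected, is needed to complete the proof.
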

\begin{proof}
A stationary solution $(u_s, v_s)$ of system (\ref{ssystem}) satisfies 
\begin{equation}\label{coex_r2}
\begin{cases}
\nabla \cdot  \left[ a_1(x) \nabla \left(\frac{\displaystyle u_s(x)}{\displaystyle P(x)}\right) \right]+
r(x)u_s(x)\left(1-\frac{\displaystyle u_s(x)+v_s(x)}{\displaystyle K(x)}\right)=0,\;x\in\Omega, \vspace{2mm} \\
\nabla \cdot \left[ a_2(x) \nabla \left(\frac{\displaystyle v_s(x)}{\displaystyle Q(x)}\right) \right] +
r(x)v_s(x)\left(1-\frac{\displaystyle u_s(x)+v_s(x) }{\displaystyle K(x)}\right) =0,\;x\in\Omega, \vspace{2mm} \\
\frac{\displaystyle\partial (u_s/P)}{\displaystyle\partial n}
=\frac{\displaystyle\partial (v_s/Q)}{\displaystyle\partial n}=0,\;x\in\partial\Omega.
\end{cases}
\end{equation}
The direct substitution, due to $K(x)\equiv \alpha P+\beta Q$, 
immediately implies that $(\alpha P(x),\beta Q(x))$ is a coexistence stationary solution of (\ref{coex_r2}).
To show the uniqueness, assume that $(u_s, v_s) \not\equiv (\alpha P(x),\beta Q(x))$ is a coexistence equilibrium 
satisfying (\ref{coex_r2}).

Adding the first two equations of (\ref{coex_r2}), integrating  over $\Omega$ and taking into account the 
Neumann boundary conditions, we have
\begin{equation}
\label{s1_ceq2}
\int\limits_\Omega r(u_s+v_s)
\left(1-\frac{\displaystyle u_s+v_s}{\displaystyle K}\right)\,dx= 0,
\end{equation}
which implies
\begin{equation}\label{c5_e1}
\int \limits_\Omega rK\left(1-\frac{\displaystyle u_s+v_s}{\displaystyle K}\right)\,dx=
\int \limits_\Omega r K\left(1-\frac{\displaystyle u_s+v_s}{\displaystyle K}\right)^2\,dx> 0,
\end{equation}
unless $u_s+v_s\equiv K$. However, if $u_s+v_s\equiv K$, the function $w_s=u_s/P$ should satisfy 
$$ \nabla \cdot (a_1(x) \nabla w_s)=0, ~x \in \Omega, ~~ \partial w_s/\partial n=0, ~ x \in \partial \Omega,$$ 
and thus $w_s$ is constant by the Maximum Principle \cite[Theorem 3.6]{Gilb},
which means that $u_s/P$ is constant on $\Omega$.
Similarly, $v_s/Q$ is constant on $\Omega$. From the fact that $K=\alpha P +\beta Q$ is
the only possible representation of $K$ as a linear combination of $P,Q$ (otherwise,
$P$ and $Q$ are linearly dependent), we obtain $u_s=\alpha P$, $v_s=\beta Q$.

Further, dividing the second equation of \eqref{coex_r2} by $v_s/Q$ and integrating over $\Omega$, we obtain
\begin{equation}\label{c5_e2}
\int \limits_\Omega r Q \left(\frac{\displaystyle u_s+v_s}{\displaystyle K}-1\right)\,dx=
\int \limits_\Omega a_2 \frac{|\nabla (v_s/Q)|^2}{(v_s/Q)^2}\,dx\geq 0.
\end{equation}

Next, let $u_s+v_s \not\equiv K$.
Consider the eigenvalue problem
\begin{equation}
%\begin{array}{ll}
\nabla \cdot \left[ a_1 \nabla \left(\frac{\phi}{P} \right)\right]
+r\phi \left(1-\frac{\displaystyle u_s+v_s}{\displaystyle K}\right)
=\sigma\phi, \;x\in\Omega,
\frac{\displaystyle \partial (\phi/P)}{\displaystyle \partial n}=0, \;x\in\partial\Omega
%\end{array} 
\label{c5_eig1}
\end{equation}
Its principal eigenvalue $ \sigma_1 $ is given by  \cite{CC}
$$
\sigma_1 
= \left. \sup_{\phi \neq 0, \phi\in W^{1,2}} \left[-
\int \limits_\Omega a_1 |\nabla (\phi/P)|^2\,dx
+\int \limits_\Omega r(x)\frac{\phi^2}{P} \left(1-\frac{\displaystyle u_s+v_s}{\displaystyle K}\right)\,dx\right] \right/
\int \limits_\Omega \frac{\phi^2}{P}\,dx
$$
Consider
%$
%\sigma_1\int\limits_{\Omega}\alpha P(x) \,dx \geq \int\limits_{\Omega}r(x) \alpha P(x) \left(1-\frac{\displaystyle u_s+v_s}
%{\displaystyle K}\right)\,dx
%$
%by substituting 
$\phi(x)=\sqrt{\alpha}P(x)$. Since $\alpha P=K - \beta Q$, we have
$$
\sigma_1 \geq \left. \int\limits_{\Omega}r (K-\beta Q) \left(1-\frac{\displaystyle u_s+v_s}
{\displaystyle  K}\right)\,dx \right/ \int\limits_{\Omega}\alpha P \,dx 
$$
However, the numerator in the right-hand side equals
$$
\int\limits_{\Omega}r K \left(1-\frac{\displaystyle u_s+v_s}
{\displaystyle  K}\right)\,dx+\beta \int\limits_{\Omega} r Q \left(\frac{\displaystyle u_s+v_s}
{\displaystyle  K}-1\right)\,dx>0, 
$$
from (\ref{c5_e1}) and (\ref{c5_e2}), thus $\sigma_1>0$. However,
$u_s$ is the solution of 
\begin{equation}
\begin{array}{ll}
\nabla \cdot \left[ a_1 \nabla \left(\frac{u_s}{P} \right)\right]
+r u_s \left(1-\frac{\displaystyle u_s+v_s}{\displaystyle K}\right)
= 0, & \;x\in\Omega, \\
\frac{\displaystyle \partial (u_s/P)}{\displaystyle \partial n}=0, \;x\in\partial\Omega
\end{array}
\label{c5_eig1abc}
\end{equation}
and thus is a positive principal eigenfunction of (\ref{c5_eig1}) associated with the principal eigenvalue 0. Thus 
$(u_s,v_s)=(\alpha P(x),\beta Q(x))$ is the unique coexistence solution of (\ref{ssystem}), 
whenever $K(x)=\alpha P(x)+\beta Q(x)$ and $P$,$Q$ are linearly independent.
\end{proof}

Using Lemma~\ref{Lsteady1abc} and the same scheme as in the proof of Lemma~\ref{coex_r1}, we obtain

\begin{Lemma}\label{coex_r1abc}
Assume that $P(x)/K(x)$ is constant on $\Omega$ and $Q(x)$, $K(x)$ are linearly independent, 
then the system (\ref{ssystem}) has no coexistence equilibrium.
\end{Lemma}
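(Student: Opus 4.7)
The plan is to follow the scheme of the proof of Lemma~\ref{coex_r1} and argue by contradiction: suppose $(u_s,v_s)$ is a positive coexistence equilibrium of \eqref{ssystem}, and then rule out both the possibility that $u_s+v_s\equiv K$ and the possibility that $u_s+v_s\not\equiv K$ on $\Omega$.

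In the first case, $u_s+v_s\equiv K$ reduces the first equation of \eqref{coex_r2} to $\nabla\cdot[a_1\nabla(u_s/P)]=0$ with Neumann boundary condition $\partial(u_s/P)/\partial n=0$, so the maximum principle \cite[Theorem~3.6]{Gilb} forces $u_s/P$ to be a positive constant; the same argument applied to the second equation forces $v_s/Q$ to be a positive constant. Hence $u_s=c_1 P$ and $v_s=c_2 Q$ for some $c_1,c_2>0$. Since $P\equiv c K$ by hypothesis, substituting into $u_s+v_s\equiv K$ yields $(c_1 c-1)K+c_2 Q\equiv 0$; linear independence of $K$ and $Q$ forces $c_2=0$, contradicting $v_s>0$.

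In the second case, consider the eigenvalue problem obtained by linearizing the first equation of \eqref{ssystem} around $(u_s,v_s)$,
\begin{equation*}
\nabla\cdot\left[a_1\nabla(\phi/P)\right]+r\phi\left(1-\frac{u_s+v_s}{K}\right)=\sigma\phi,\;x\in\Omega,\quad \frac{\partial(\phi/P)}{\partial n}=0,\;x\in\partial\Omega,
\end{equation*}
with the same variational characterization of the principal eigenvalue $\sigma_1$ as in the proof of Lemma~\ref{coex_r1}. Testing with $\phi=P$ (so $\phi/P\equiv 1$ and the gradient term drops out) and using $P\equiv cK$ gives
\begin{equation*}
\sigma_1\ \geq\ \frac{\int_\Omega rP\left(1-\dfrac{u_s+v_s}{K}\right)dx}{\int_\Omega P\,dx}\ =\ \frac{c\int_\Omega rK\left(1-\dfrac{u_s+v_s}{K}\right)dx}{c\int_\Omega K\,dx}\ >\ 0,
\end{equation*}
where strict positivity comes from Lemma~\ref{Lsteady1abc} applied to $(u_s,v_s)$ with $u_s+v_s\not\equiv K$. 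On the other hand, $u_s$ itself is a positive solution of the same eigenvalue problem with $\sigma=0$, so $u_s$ is a positive principal eigenfunction and thus $\sigma_1=0$, contradicting $\sigma_1>0$.

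The only genuinely new step compared to Lemma~\ref{coex_r1} is the reduction in the first case: without a matching $Q$-component in $K$, the condition $u_s+v_s\equiv K$ immediately forces $v_s\equiv 0$, which is the main obstacle to verify carefully because one has to invoke linear independence of $K$ and $Q$ at exactly the right place. The eigenvalue argument in the second case is essentially a specialization of the one in Lemma~\ref{coex_r1} where $P\equiv cK$ makes Lemma~\ref{Lsteady1abc} alone sufficient, without needing the auxiliary bound from Lemma~\ref{Lsteady2}.
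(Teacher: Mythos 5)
Your proof is correct and follows exactly the scheme the paper invokes (Lemma~\ref{Lsteady1abc} together with the argument of Lemma~\ref{coex_r1}): the dichotomy $u_s+v_s\equiv K$ versus $u_s+v_s\not\equiv K$, the maximum principle plus linear independence of $K$ and $Q$ in the first case, and the eigenvalue comparison with test function $\phi=P\equiv cK$ against the zero principal eigenvalue carried by $u_s$ in the second. Nothing is missing; your simplification that Lemma~\ref{Lsteady2} is not needed here is precisely what the proportionality $P\equiv cK$ buys.
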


Under the assumptions of Lemma~\ref{coex_r1abc}, system \eqref{ssystem} has the semi-trivial equilibrium $(K,0)$.

We recall that, according to the theory of monotone dynamical systems, once the trivial equilibrium is a 
repeller, both semi-trivial equilibrium solutions are unstable, the coexistence equilibrium is globally 
asymptotically stable.

\begin{Th}\label{c5_Th6}
Let $P(x)$ and $Q(x)$ be linearly idependent on $\Omega$, and
$K(x)\equiv \alpha P+\beta Q$, where $\alpha>0$, $\beta>0$. Then the unique coexistence solution 
$(u_s,v_s)\equiv (\alpha P(x),\beta Q(x))$ of \eqref{ssystem} is globally asymptotically stable. 
\end{Th}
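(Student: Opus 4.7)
The strategy is precisely the one foreshadowed in the paragraph preceding the statement: invoke the theory of monotone (competitive) dynamical systems to deduce global asymptotic stability from the already-proven repelling/instability properties of the boundary equilibria together with uniqueness of the positive equilibrium.

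First, I would record that system \eqref{ssystem} generates a strongly order-preserving semiflow on $C(\overline{\Omega})^2 \cap \{u,v\ge 0\}$ under the competitive order $(u_1,v_1) \le_K (u_2,v_2) \iff u_1 \le u_2 \text{ and } v_1 \ge v_2$. This is standard after noting that the diffusion term expands as $(a_1/P)\Delta u + b_1(x)\cdot\nabla u + c_1(x)u$ with smooth coefficients and a uniformly elliptic principal part, and that the boundary condition is equivalent to the Neumann condition \eqref{boundary} on $u/P$. Classical parabolic regularity and the strong maximum principle give compactness and strict monotonicity of the semiflow, and dissipativity (uniform bounds on $u+v$) follows by a comparison argument with constant super-solutions of $u/P$ and $v/Q$. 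All of this is implicit in the references \cite{CC,Pao,Hs} cited earlier.

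Next, I would verify that the standing hypotheses of the preceding lemmas are met. The assumption $K \equiv \alpha P + \beta Q$ with $\alpha,\beta>0$ and $P,Q$ linearly independent forces both $(P,K)$ and $(Q,K)$ to be linearly independent pairs. It also ensures $\nabla\cdot[a_1\nabla(K/P)] \not\equiv 0$ on $\Omega$: otherwise the Neumann problem for $K/P$ would force $K/P$ to be constant, contradicting linear independence of $P$ and $K$; the analogous statement holds for $K/Q$. Hence Lemmas \ref{zero_equilibrium}--\ref{coex_r1} all apply, yielding (i) the trivial equilibrium is a repeller, (ii) both semi-trivial equilibria $(u^{\ast},0)$ and $(0,v^{\ast})$ are unstable, and (iii) the unique positive equilibrium is $(\alpha P,\beta Q)$.

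Finally, I would invoke the classical trichotomy for two-species competitive reaction--diffusion systems (Hess--Hsu--Smith--Waltman type, see \cite{Hs,CC}): when the trivial equilibrium is a repeller, both semi-trivial equilibria are unstable, and the coexistence equilibrium is unique, every trajectory starting in the interior of the positive cone converges to this unique coexistence equilibrium. Combined with (i)--(iii) above, this yields the global asymptotic stability of $(u_s,v_s) \equiv (\alpha P(x),\beta Q(x))$. The main obstacle here is merely bookkeeping, namely matching the hypotheses of the abstract competitive-system theorem (strong order-preservation, compactness, order-compact attractor) to the specific operator $\nabla\cdot[a\nabla(\cdot/P)]$; once that verification is done, the conclusion is essentially immediate from the results already assembled in this section.
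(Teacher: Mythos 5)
Your proposal is correct and follows essentially the same route as the paper: the theorem is obtained exactly by combining Lemma~\ref{zero_equilibrium} (the origin is a repeller), Lemma~\ref{semi_p1} and its counterpart for $(0,v^{\ast})$ (instability of the semi-trivial states, whose hypotheses you correctly deduce from $K\equiv\alpha P+\beta Q$ with $P,Q$ independent), and Lemma~\ref{coex_r1} (uniqueness of the coexistence state), and then invoking the monotone/competitive-systems trichotomy cited in \cite{CC,Pao,Hs}. One small remark: your justification that $\nabla\cdot[a_1\nabla(K/P)]\not\equiv 0$ via a ``Neumann problem for $K/P$'' is not actually supported (no boundary condition is imposed on $K/P$), but this condition is not needed here, since Lemma~\ref{semi_p1} only uses the non-negativity of $\int_\Omega rK(1-u^{\ast}/K)\,dx$ together with the strict positivity coming from Lemma~\ref{Lsteady2}.
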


Note that once the trivial equilibrium is a    
repeller, there is no coexistence equilibrium and one of the two semi-trivial equilibrium solutions is 
unstable, the other one is globally asymptotically stable.
Using  Lemmata~\ref{semi_p1abc} and \ref{coex_r1abc}, we can prove the following result.

\begin{Th}\label{c5_Th7}
Let $P(x)/K(x)$ be constant, $P$ and $Q$ be linearly independent on $\Omega$.
Then the semi-trivial equilibrium 
$(K(x),0)$ of (\ref{ssystem}) is globally asymptotically stable. 
\end{Th}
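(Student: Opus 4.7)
The plan is to apply the trichotomy for monotone competitive dynamical systems stated in the paragraph just before the theorem: once the trivial equilibrium is a repeller, no coexistence equilibrium exists, and one of the two semi-trivial equilibria is unstable, the remaining semi-trivial equilibrium must be globally asymptotically stable. So my task reduces to verifying these three preconditions under the hypotheses $P(x)/K(x) \equiv c$ for some constant $c>0$ and $P, Q$ linearly independent on $\Omega$.

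First I would identify explicitly the first semi-trivial equilibrium. Since $P = cK$, the function $u^{\ast}(x) \equiv K(x)$ satisfies \eqref{eq_u}: the ratio $u^{\ast}/P = 1/c$ is constant, so $\nabla \cdot [a_1 \nabla(u^{\ast}/P)] \equiv 0$, the logistic term $r u^{\ast}(1-u^{\ast}/K) \equiv 0$, and the boundary condition $\partial(u^{\ast}/P)/\partial n = 0$ holds trivially. Hence $(K(x),0)$ is indeed a semi-trivial steady state of \eqref{ssystem}. I would also record that, under $P = cK$, the hypothesis "$P, Q$ linearly independent" is equivalent to "$K, Q$ linearly independent", which is precisely what the auxiliary lemmas require.

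Next I would check the three preconditions. Lemma~\ref{zero_equilibrium} gives that $(0,0)$ is a repeller, regardless of the choice of $P, Q, K$. Lemma~\ref{coex_r1abc}, applicable because $P/K$ is constant and $Q, K$ are linearly independent, rules out any coexistence equilibrium. Lemma~\ref{semi_p1abc}, applicable for the same reason, yields that the other semi-trivial equilibrium $(0, v^{\ast}(x))$ is unstable. Combining these three facts with the monotone dynamical systems structure of \eqref{ssystem} (the general framework quoted from \cite{CC,Pao,Hs}), the cited result forces $(K(x),0)$ to be globally asymptotically stable.

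The only mildly subtle point, which I expect to be the main (but quite minor) obstacle, is that Lemma~\ref{semi_p1abc} is literally stated for $P \equiv K$ whereas here we only have $P = cK$. This is purely a cosmetic issue: $\nabla(\,\cdot\,/P) = c^{-1}\nabla(\,\cdot\,/K)$, so one may absorb the scaling into the coefficient $a_1$ without altering any structural property, and the instability conclusion carries over verbatim. Once this observation is recorded, the theorem follows by simply quoting the three lemmas and the monotone dynamical systems trichotomy.
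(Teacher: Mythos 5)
Your proposal is correct and follows essentially the same route as the paper, which proves Theorem~\ref{c5_Th7} exactly by combining Lemma~\ref{zero_equilibrium}, Lemma~\ref{coex_r1abc} and Lemma~\ref{semi_p1abc} with the monotone-dynamical-systems trichotomy quoted just before the statement. Your extra observations (verifying that $(K(x),0)$ is indeed an equilibrium when $P=cK$, and that the constant $c$ can be absorbed so Lemma~\ref{semi_p1abc} applies) are accurate minor refinements of the same argument.
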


\section{Influence of Diffusion Coefficients and Intrinsic Growth Rates on Competition Outcome}
\label{sec:influence}

Further, we study the dependency of the scenario (competitive exclusion or coexistence) in the
case when $K$ is not in a positive hull of $P$ and $Q$. To this end, we assume that $a_1$ and $a_2$ are 
proportional, and $r$ is multiplied by two different constants
\begin{align}\label{ssystem1}
\begin{cases}
\frac{\displaystyle \partial u}{\displaystyle \partial t}
= \nabla \cdot \left[ \frac{\displaystyle d_1 a(x)}{\displaystyle P(x)} \left( \nabla u(t,x)- u(t,x) \frac{\displaystyle
\nabla P}{\displaystyle P} \right) \right]
+r_1 r(x)u(t,x)\left(1-\frac{\displaystyle u(t,x)+ v(t,x)}{\displaystyle K(x)}\right), \vspace{3mm}
\\
\frac{\displaystyle \partial v}{\displaystyle \partial t}
= \nabla \cdot \left[ \frac{\displaystyle d_2 a(x)}{\displaystyle Q(x)} \left( \nabla v(t,x)- v(t,x) \frac{\nabla Q}{Q}
\right) \right] + r_2 r(x)v(t,x)\left(1-\frac{\displaystyle  u(t,x)+v(t,x) }
{\displaystyle K(x)}\right),\\
t>0,\;x\in{\Omega},\\
\frac{\displaystyle \partial u}{\displaystyle \partial n} - \frac{\displaystyle u}{\displaystyle P} \frac{\displaystyle
\partial P}{\displaystyle \partial n}
=\frac{\displaystyle \partial v}{\displaystyle \partial n}  -
\frac{\displaystyle v}{\displaystyle Q} \frac{\displaystyle \partial Q}{\displaystyle \partial n} =0,~x\in 
{\partial}{\Omega},\\
u(0,x)=u_0(x), \;v(0,x)=v_0(x),\;x\in{\Omega}.
\end{cases}
\end{align}

\begin{Lemma}
\label{Lsmalld}
Let $K$, $P$ and $K$, $Q$ be linearly independent, $d_2$ and $r_2$  be fixed. Then, for a fixed $r_1$ there is $d^{\ast}$ such that
for $d_1<d^{\ast}$, the semi-trivial equilibrium $(0,v^{\ast})$ is unstable. For a fixed $d_1$, there is $r^{\ast}$
such that for $r_1>r^{\ast}$, the semi-trivial equilibrium $(0,v^{\ast})$ of \eqref{ssystem1} is unstable. 
\end{Lemma}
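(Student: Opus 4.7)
The plan is to linearize \eqref{ssystem1} at the semi-trivial equilibrium $(0,v^*)$ in the $u$-direction and show that the principal eigenvalue $\sigma_1$ of the resulting scalar problem is positive whenever $d_1$ is small or $r_1$ is large. The relevant eigenvalue problem is
\begin{equation*}
d_1 \nabla\!\cdot\!\left[a(x)\nabla(\phi/P)\right] + r_1 r(x)\phi\left(1 - \frac{v^*}{K}\right) = \sigma\phi,\quad x\in\Omega,\qquad \frac{\partial(\phi/P)}{\partial n} = 0 \text{ on } \partial\Omega.
\end{equation*}
Exactly as in Lemma~\ref{semi_p1}, $\sigma_1$ admits the Rayleigh representation
\begin{equation*}
\sigma_1 = \sup_{\phi \not\equiv 0} \frac{\displaystyle -d_1\!\int_\Omega a|\nabla(\phi/P)|^2\,dx + r_1\!\int_\Omega r\,\frac{\phi^2}{P}\left(1-\frac{v^*}{K}\right)dx}{\displaystyle \int_\Omega \frac{\phi^2}{P}\,dx},
\end{equation*}
so it suffices to produce one test function making the numerator positive in either asymptotic regime.

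The preparatory step is to locate an open set on which $1 - v^*/K$ is strictly positive. Integrating \eqref{eq_v} over $\Omega$ yields $\int_\Omega r v^*(1 - v^*/K)\,dx = 0$, so by positivity of $r$ and $v^*$ the factor $1 - v^*/K$ must change sign, unless $v^* \equiv K$. The latter is ruled out by the hypothesis that $K$ and $Q$ are linearly independent: substituting $v^* = K$ into \eqref{eq_v} would force $\nabla\cdot[a_2\nabla(K/Q)]\equiv 0$ in $\Omega$ with $\partial(K/Q)/\partial n = 0$ on $\partial\Omega$, hence $K/Q$ would be constant by the maximum principle, contradicting linear independence. Continuity of $v^*$ and $K$ then produces a nonempty open set $\Omega_+\subset\Omega$ and some $\delta>0$ with $1 - v^*/K \ge \delta$ throughout $\Omega_+$.

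With $\Omega_+$ in hand, I would take any nontrivial nonnegative $\psi \in C_c^\infty(\Omega_+)$ and set $\phi := P\psi$; the Neumann boundary condition is automatic, and the numerator of the Rayleigh quotient reduces to $-d_1 C_1 + r_1 C_2$, where $C_1 := \int_\Omega a|\nabla\psi|^2\,dx > 0$ (since a nontrivial $\psi\in C_c^\infty(\Omega_+)$ cannot be constant) and $C_2 := \int_{\Omega_+} rP\psi^2(1-v^*/K)\,dx \ge \delta\int_{\Omega_+}rP\psi^2\,dx > 0$ depend only on $\psi$. Then, for fixed $r_1$, any $d_1 < d^* := r_1 C_2/C_1$ makes $\sigma_1 > 0$; for fixed $d_1$, any $r_1 > r^* := d_1 C_1/C_2$ does the same. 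Positivity of $\sigma_1$ upgrades to instability of $(0,v^*)$ by the monotone-system theory already invoked throughout the paper. The only genuinely delicate step is the sign argument for $1 - v^*/K$; once it is in place, the choice of test function is routine. Note that the hypothesis of linear independence of $K$ and $P$ is not used in this direction — it presumably belongs to the symmetric statement about instability of $(u^*,0)$.
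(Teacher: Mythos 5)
Your proof is correct, and it follows the same overall skeleton as the paper (linearize at $(0,v^{\ast})$, write the Rayleigh characterization of the principal eigenvalue, exhibit a test function making the numerator positive, read off the thresholds $d^{\ast}$, $r^{\ast}$), but the key step is carried out differently. The paper plugs in the global test function $\phi=\sqrt{KP}$ and invokes the already established integral inequality \eqref{eq_v1} (the $v$-analogue of Lemma~\ref{Lsteady1}), obtaining the explicit thresholds $d^{\ast}=r_1 M\left(\int_\Omega a|\nabla\sqrt{K/P}|^2dx\right)^{-1}$ and $r^{\ast}=\frac{d_1}{M}\int_\Omega a|\nabla\sqrt{K/P}|^2dx$ with $M=\int_\Omega rK(1-v^{\ast}/K)\,dx>0$. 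You instead localize: integrating \eqref{eq_v} to force $1-v^{\ast}/K$ to change sign (ruling out $v^{\ast}\equiv K$ via the Neumann condition and the maximum principle, exactly the argument the paper uses elsewhere), and then taking $\phi=P\psi$ with $\psi$ a bump supported where $1-v^{\ast}/K\geq\delta>0$. Both routes are sound; the paper's buys explicit, globally defined constants and reuses Lemmas~\ref{Lsteady2}--\ref{Lsteady1}, while yours is more self-contained (it does not need \eqref{eq_v1}) at the price of thresholds depending on the chosen bump function. Your closing remark is also accurate: linear independence of $K$ and $P$ is not needed for this half of the statement — in the paper's proof it only guarantees that $\int_\Omega a|\nabla\sqrt{K/P}|^2dx\neq 0$, i.e.\ that $d^{\ast}$ is finite, and the instability conclusion holds trivially for all $d_1$ when $K/P$ is constant.
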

\begin{proof}
Consider the eigenvalue problem  associated with the first equation in \eqref{ssystem1}
around $(0,v^{\ast})$
 \begin{equation}\label{eig_p1abc}
\nabla \cdot \left[ d_1 a(x) \nabla \left(\frac{\displaystyle \phi(x) }{\displaystyle P(x)}\right) \right]
+r_1 r(x)\phi(x)  \left(1-\frac{\displaystyle v^{\ast}(x)}{\displaystyle K(x)}\right)=\sigma \phi(x),\; x\in \Omega,\;
 \frac{\displaystyle \partial (\phi/P)}{\displaystyle \partial n}=0,\; x\in\partial\Omega.
\end{equation}
The principal eigenvalue of \eqref{eig_p1abc} is defined as
$$
\sigma_1 =
\sup_{\phi \neq 0, \phi\in W^{1,2}} \left. \left[
-\int \limits_\Omega d_1 a |\nabla (\phi/P)|^2\,dx
+\int \limits_\Omega r_1 r(x)\frac{\phi^2}{P} \left(1-\frac{\displaystyle v^{\ast}}{\displaystyle K}\right)\,dx\right]
\right/
\int \limits_\Omega \frac{\phi^2}{P}\,dx,
$$
and the semi-trivial equilibrium $(0,v^{\ast})$ is unstable if we can find $\phi$ such that the expression in the right hand
side is positive. Taking $\phi=\sqrt{KP}$ and using the fact
that for linearly independent $K$, $Q$
$$
M:= \int \limits_\Omega r(x) K(x)  \left(1-\frac{\displaystyle v^{\ast}}{\displaystyle K}\right)\,dx >0 
$$
we obtain that
$$
-\int \limits_\Omega d_1 a |\nabla (\phi/P)|^2\,dx
+\int \limits_\Omega r_1 r(x)\frac{\phi^2}{P} \left(1-\frac{\displaystyle v^{\ast}}{\displaystyle 
K}\right)\,dx
=
-\int \limits_\Omega d_1 a |\nabla (\sqrt{K/P})|^2\,dx + r_1 M >0
$$
whenever either
$$d_1< d^{\ast} := r_1 M \left(\int \limits_\Omega a |\nabla (\sqrt{K/P})|^2\,dx \right)^{-1}
$$
or
$$
r_1> r^{\ast} := \frac{d_1}{M} \int \limits_\Omega a |\nabla (\sqrt{K/P})|^2\,dx,
$$
which concludes the proof. 
\end{proof}

Similarly, for fixed $r_1$ and $d_1$, for any fixed $r_2$ we can find $d^{\ast}$ and for any fixed $d_2$ there is $r^{\ast}$ such that 
for $d_2< d^{\ast}$ or $r_2> r^{\ast}$, respectively, the equilibrium $(u^{\ast},0)$ is unstable, and the second species survives.
Thus, by either slowing its dispersal speed or increasing its intrinsic growth rate, the species can provide
its survival, unless the other chooses the optimal strategy proportional to $K$.

However, from Lemma~\ref{Lsmalld} we cannot conclude that for small $d_i$ there is coexistence, 
as well as for large $r_i$,
since the semi-trivial solutions depend on these constants.

Next, let us consider the cases when both populations have the same diffusion strategy with $P(x) \equiv Q(x)$, which is 
linearly independent of $K(x)$. 

\begin{Lemma}
\label{higher_diff_coex}
Let $a_1(x)=a_2(x)=a(x)$, $P(x) \equiv Q(x)$ satisfy 
\begin{equation}
\label{noncorrespondence}
\nabla \cdot \left[ a \nabla \left( \frac{K}{P} \right) \right] \not\equiv 0, ~~x \in \Omega,
\end{equation}
$r_1=r_2=1$, $d_1<d_2$. Then
there is no coexistence equilibrium of \eqref{ssystem}. 
\end{Lemma}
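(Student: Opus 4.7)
The plan is to argue by contradiction, assuming a positive coexistence equilibrium $(u_s,v_s)$ of \eqref{ssystem1} exists and exploiting the strict inequality $d_1<d_2$ through the variational characterization of the principal eigenvalue, very much in the spirit of the classical argument in \cite{Dock}. The crucial observation is that, since $u_s>0$ and $v_s>0$ satisfy the stationary version of \eqref{ssystem1} under the hypotheses of the lemma ($a_1=a_2=a$, $Q\equiv P$, $r_1=r_2=1$), each is a positive solution of the \emph{linear} weighted eigenvalue problem
\begin{equation*}
d_i\,\nabla\cdot\left[a\,\nabla\left(\frac{\phi}{P}\right)\right]+r\phi\left(1-\frac{u_s+v_s}{K}\right)=\sigma\phi,\ x\in\Omega,\quad \frac{\partial(\phi/P)}{\partial n}=0,\ x\in\partial\Omega,
\end{equation*}
namely $\phi=u_s$ yields $\sigma_{1}^{(1)}=0$ for $i=1$ and $\phi=v_s$ yields $\sigma_{1}^{(2)}=0$ for $i=2$; this is the Krein--Rutman theorem applied to the operator which is self-adjoint in the weighted space $L^{2}(\Omega;\,dx/P)$.

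Next I would insert $\phi=v_s$ as a trial function in the Rayleigh characterization
$$
\sigma_{1}^{(i)}=\sup_{\phi\neq 0,\,\phi\in W^{1,2}}\frac{-d_i\int_\Omega a\,|\nabla(\phi/P)|^{2}\,dx+\int_\Omega r(\phi^{2}/P)\bigl(1-(u_s+v_s)/K\bigr)\,dx}{\int_\Omega \phi^{2}/P\,dx}.
$$
Because $v_s$ actually attains the supremum for $i=2$, one obtains the identity
$$
\int_\Omega r(v_s^{2}/P)\bigl(1-(u_s+v_s)/K\bigr)\,dx=d_{2}\int_\Omega a\,|\nabla(v_s/P)|^{2}\,dx.
$$
Feeding it into the inequality Rayleigh$(v_s;d_1)\le \sigma_{1}^{(1)}=0$ produces $(d_{2}-d_{1})\int_\Omega a\,|\nabla(v_s/P)|^{2}\,dx\le 0$, and since $d_{2}-d_{1}>0$ and $a>0$, the only option is $\nabla(v_s/P)\equiv 0$, that is, $v_s\equiv\beta P$ for some constant $\beta>0$.

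The proof is then closed by back-substitution. With $v_s=\beta P$ the diffusion term of the stationary $v$-equation is zero, so positivity of $r$, $\beta$, $P$ forces $u_s\equiv K-\beta P$; inserting this into the stationary $u$-equation makes the logistic reaction vanish identically and leaves
$$
d_{1}\,\nabla\cdot\!\left[a\,\nabla\!\left(\frac{K-\beta P}{P}\right)\right]=d_{1}\,\nabla\cdot\!\left[a\,\nabla\!\left(\frac{K}{P}\right)\right]\equiv 0\quad\text{on }\Omega,
$$
which directly contradicts the standing hypothesis \eqref{noncorrespondence}. The step I expect to require the most care is the first one: one must check that under the modified Neumann condition $\partial(\phi/P)/\partial n=0$ the weighted eigenvalue problem is genuinely self-adjoint in $L^{2}(\Omega;\,dx/P)$, so that the supremum formula is valid and Krein--Rutman identifies $u_s$ and $v_s$ as the principal eigenfunctions of the respective problems at $\sigma=0$. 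Once that groundwork is in place, the trial-function comparison together with the elementary back-substitution finishes the job.
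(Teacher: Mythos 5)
Your proposal is correct and follows essentially the same route as the paper's own proof: both identify $u_s$ and $v_s$ as positive principal eigenfunctions (eigenvalue $0$) of the $d_1$- and $d_2$-weighted eigenvalue problems, test the $d_1$-Rayleigh quotient with $\phi=v_s$ to extract the term $(d_2-d_1)\int_\Omega a\,|\nabla(v_s/P)|^2\,dx$, and dispose of the residual case $v_s/P\equiv\mathrm{const}$ by back-substitution, which forces $u_s+v_s\equiv K$ and $\nabla\cdot[a\nabla(K/P)]\equiv 0$, contradicting \eqref{noncorrespondence}. The only cosmetic difference is the order of the contradiction (you deduce $\nabla(v_s/P)\equiv 0$ from $\sigma_1^{(1)}=0$, while the paper shows $\sigma_1>0$ and contradicts $\sigma_1=0$), which is logically equivalent.
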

\begin{proof}
We assume that there is a coexistence equilibrium $(u_s,v_s)$.
Following the proof of Lemma~\ref{coex_r1}, consider the eigenvalue 
problems
\begin{equation}
\nabla \cdot \left[ d_1 a(x) \nabla \left(\frac{\phi(x)}{P(x)} \right)\right]
+r\phi(x) \left(1-\frac{\displaystyle u_s+v_s}{\displaystyle K}\right)
=\sigma\phi(x), \;x\in\Omega,
\frac{\displaystyle \partial (\phi/P)}{\displaystyle \partial n}=0, \;x\in\partial\Omega
\label{c5_eig_abc1}
\end{equation}
and
\begin{equation}
\nabla \cdot \left[ d_2 a(x) \nabla \left(\frac{\psi(x)}{P(x)} \right)\right]
+r\psi(x) \left(1-\frac{\displaystyle u_s+v_s}{\displaystyle K}\right)
=\sigma\psi(x), \;x\in\Omega,
\frac{\displaystyle \partial (\psi/P)}{\displaystyle \partial n}=0, \;x\in\partial\Omega.
\label{c5_eig_abc2}
\end{equation}
The principal eigenvalue $\tilde{\sigma}_1$  of \eqref{c5_eig_abc2} is defined as
\begin{equation}
\tilde{\sigma}_1
= \left. \sup_{\psi \neq 0, \psi\in W^{1,2}} \left[-
\int \limits_\Omega d_2 a |\nabla (\psi/P)|^2\,dx
+\int \limits_\Omega r \frac{\psi^2}{P} \left(1-\frac{\displaystyle u_s+v_s}{\displaystyle K}\right)\,dx\right] \right/
\int \limits_\Omega \frac{\psi^2}{P}\,dx.
\label{add2abc}
\end{equation}
However, since $(u_s,v_s)$ is an equilibrium solution, the function $v_s$ satisfies
$$
\nabla \cdot \left[ d_2 a \nabla \left(\frac{v_s}{P} \right)\right]
+r v_s \left(1-\frac{\displaystyle u_s+v_s}{\displaystyle K}\right)
=0, \;x\in\Omega,
\frac{\displaystyle \partial (v_s/P)}{\displaystyle \partial n}=0, \;x\in\partial\Omega
$$
and is consequently a positive principal eigenfunction of  \eqref{c5_eig_abc2} corresponding
to the principal eigenvalue $\tilde{\sigma}_1 = 0$.
According to \eqref{add2abc}, 
\begin{equation}
- \int \limits_\Omega d_2 a |\nabla (v_s/P)|^2\,dx
 + \int \limits_\Omega r \frac{v_s^2}{P} 
\left(1-\frac{\displaystyle u_s+v_s}{\displaystyle K}\right)\,dx  =0
\label{add2ab}    
\end{equation} 
Also, as $(u_s,v_s)$ is an equilibrium solution, $u_s$ satisfies
$$
\nabla \cdot \left[ d_1 a \nabla \left(\frac{u_s}{P} \right)\right]
+r u_s \left(1-\frac{\displaystyle u_s+v_s}{\displaystyle K}\right)
=0, \;x\in\Omega,
\frac{\displaystyle \partial (u_s/P)}{\displaystyle \partial n}=0, \;x\in\partial\Omega,
$$
The principal eigenvalue $\sigma_1$  of \eqref{c5_eig_abc1} is defined as
\begin{equation}
\sigma_1
= \left. \sup_{\phi \neq 0, \phi\in W^{1,2}} \left[-
\int \limits_\Omega d_1 a |\nabla (\phi/P)|^2\,dx
+\int \limits_\Omega r \frac{\phi^2}{P} 
\left(1-\frac{\displaystyle u_s+v_s}{\displaystyle K}\right)\,dx\right] \right/
\int \limits_\Omega \frac{\phi^2}{P}\,dx.
\label{add1abc}
\end{equation}
Substituting $\phi=v_s$ we get by \eqref{add2ab}
\begin{eqnarray*}
& & -\int \limits_\Omega d_1 a |\nabla (v_s/P)|^2\,dx
+\int \limits_\Omega r \frac{v_s^2}{P}
\left(1-\frac{\displaystyle u_s+v_s}{\displaystyle K}\right)\,dx
\\
& = & (d_2-d_1)\int \limits_\Omega a |\nabla (v_s/P)|^2\,dx
\\
& & + \left[ - \int \limits_\Omega d_2 a |\nabla (v_s/P)|^2\,dx
 + \int \limits_\Omega r(x)\frac{v_s^2}{P}
\left(1-\frac{\displaystyle u_s+v_s}{\displaystyle K}\right)\,dx  \right] \\
& = &
(d_2-d_1)\int \limits_\Omega  a |\nabla (v_s/P)|^2\,dx >0, 
\end{eqnarray*}
unless $v_s/P$ is constant. However, $v_s/P\equiv \alpha$ implies $u_s+v_s \equiv K$ 
on $\Omega$. Substituting $u_s=K-\alpha P$ in the first equation of \eqref{ssystem}
yields on $\Omega$ that
$$
0=\nabla \cdot \left[ \frac{\displaystyle a}{\displaystyle P} \left( \nabla (K-\alpha P)- (K-\alpha P) 
\frac{\displaystyle \nabla P}{\displaystyle P} \right) \right]=
\nabla \cdot \left[  a \nabla \left( \frac{K}{P} \right) \right],
$$
which contradicts to \eqref{noncorrespondence} in the assumption of the lemma.
Thus $\sigma_1>0$. Let us note that $u_s$ satisfies
$$
\nabla \cdot \left[ d_1 a \nabla \left(\frac{u_s}{P} \right)\right]
+r u_s \left(1-\frac{\displaystyle u_s+v_s}{\displaystyle K}\right)
=0, \;x\in\Omega,
\frac{\displaystyle \partial (u_s/P)}{\displaystyle \partial n}=0, \;x\in\partial\Omega
$$
and thus is the positive principal eigenfunction of \eqref{c5_eig_abc1} corresponding to the 
principal eigenvalue $\sigma_1=0$.
The contradiction proves that there is no coexistence equilibrium.
\end{proof}

%The proof of the following result is similar.

\begin{Lemma}
\label{higher_diff_semi}
Let $P(x) \equiv Q(x)$ be non-proportional to $K(x)$ on $\Omega$, $r_1=r_2=1$, $d_1<d_2$. Then
the semi-trivial equilibrium $(0,v^{\ast})$ of \eqref{ssystem1} is unstable.
\end{Lemma}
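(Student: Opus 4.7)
The plan is to mirror the variational argument used in Lemma \ref{higher_diff_coex}, but applied to the linearization of the first equation about the semi-trivial equilibrium $(0,v^{\ast})$ instead of a coexistence equilibrium. Specifically, I will show that the principal eigenvalue of the eigenvalue problem associated with the $u$-equation around $(0,v^{\ast})$ is strictly positive, so that $(0,v^{\ast})$ cannot be stable.

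First, I would write the eigenvalue problem
\begin{equation*}
\nabla \cdot \left[ d_1 a(x) \nabla \left(\frac{\phi}{P}\right)\right]
+ r(x)\phi \left(1-\frac{v^{\ast}}{K}\right) = \sigma \phi, \quad x\in\Omega,
\qquad \frac{\partial(\phi/P)}{\partial n}=0, \quad x\in\partial\Omega,
\end{equation*}
with the Rayleigh quotient characterization of $\sigma_1$ exactly as in \eqref{add1abc} but with $u_s+v_s$ replaced by $v^{\ast}$. The key observation is that $v^{\ast}$ itself solves the analogous single-species problem with $d_2$ in place of $d_1$, i.e., equation \eqref{eq_v} with $a_2=d_2 a$ and $Q=P$. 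Multiplying that equation by $v^{\ast}/P$, integrating over $\Omega$ and using the Neumann-type boundary condition $\partial(v^{\ast}/P)/\partial n=0$ yields the identity
\begin{equation*}
\int_\Omega r\,\frac{(v^{\ast})^2}{P}\left(1-\frac{v^{\ast}}{K}\right)\,dx
= \int_\Omega d_2\, a\, |\nabla (v^{\ast}/P)|^2\,dx,
\end{equation*}
which is the analogue of the computation producing \eqref{add2ab}.

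Next I would use $\phi = v^{\ast}$ as a test function in the variational formula for $\sigma_1$. The numerator then equals
\begin{equation*}
-\int_\Omega d_1 a\, |\nabla(v^{\ast}/P)|^2\,dx
+ \int_\Omega r\,\frac{(v^{\ast})^2}{P}\left(1-\frac{v^{\ast}}{K}\right)\,dx
= (d_2-d_1)\int_\Omega a\, |\nabla(v^{\ast}/P)|^2\,dx,
\end{equation*}
which is strictly positive provided $v^{\ast}/P$ is not identically constant. Hence $\sigma_1>0$ and $(0,v^{\ast})$ is unstable.

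The only delicate point is ruling out $v^{\ast}/P\equiv c$ for some constant $c>0$. If this were the case, substituting $v^{\ast}=cP$ into equation \eqref{eq_v} with $Q=P$ gives $rv^{\ast}(1-v^{\ast}/K)\equiv 0$ in $\Omega$, which, since $r,v^{\ast}>0$, forces $v^{\ast}\equiv K$ and therefore $K\equiv cP$. This contradicts the hypothesis that $P$ and $K$ are non-proportional on $\Omega$. (Alternatively, one can argue directly: $v^{\ast}/P$ constant would make the diffusion term vanish and reduce \eqref{eq_v} to the logistic algebraic equation pointwise.) This check is essentially routine once the main computation is set up, so I expect the main obstacle to be purely bookkeeping: making sure the boundary conditions in \eqref{boundary} justify the integration by parts with the test function $v^{\ast}$, and correctly identifying the identity coming from $v^{\ast}$'s own equation so that the difference $d_2-d_1>0$ appears with the right sign.
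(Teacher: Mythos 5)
Your proposal is correct and follows essentially the same route as the paper's proof: the Rayleigh quotient for the linearized $u$-equation at $(0,v^{\ast})$ with test function $\phi=v^{\ast}$, the identity obtained from $v^{\ast}$'s own equation (multiplied by $v^{\ast}/P$ and integrated with the Neumann-type boundary condition), the resulting factor $(d_2-d_1)\int_\Omega a|\nabla(v^{\ast}/P)|^2\,dx>0$, and the exclusion of $v^{\ast}/P\equiv const$ via $v^{\ast}\equiv K\equiv cP$ contradicting non-proportionality. No gaps; this matches the paper's argument.
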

\begin{proof}
The semi-trivial equilibrium $(0,v^{\ast})$ is a solution of the problem
\begin{equation}
\nabla \cdot \left[ d_2 a \nabla \left(\frac{\psi(x)}{P(x)} \right)\right]
+r\psi(x) \left(1-\frac{\displaystyle v^{\ast}}{\displaystyle K}\right)
=\sigma\psi(x), \;x\in\Omega,
\frac{\displaystyle \partial (\psi/P)}{\displaystyle \partial n}=0, 
\;x\in\partial\Omega
\label{c5_semi_abc1a}
\end{equation}
and thus $v^{\ast}$ is a positive principal eigenfunction corresponding to the zero eigenvalue 
of the problem
\begin{equation}
\nabla \cdot \left[ d_2 a \nabla \left(\frac{v^{\ast}(x)}{P(x)} \right)\right]
+r v^{\ast}(x) \left(1-\frac{\displaystyle v^{\ast}}{\displaystyle K}\right)
= 0, \;x\in\Omega,
\frac{\displaystyle \partial (v^{\ast}/P)}{\displaystyle \partial n}=0, \;x\in\partial\Omega.
\label{c5_eig_abc2a}
\end{equation}
Integrating over $\Omega$ and using the boundary conditions, we obtain 
%\cite{CC},
\begin{equation}
- \int \limits_\Omega d_2 a |\nabla (v^{\ast}/P)|^2\,dx
 + \int \limits_\Omega r(x)\frac{(v^{\ast})^2}{P}
\left(1-\frac{\displaystyle v^{\ast}}{\displaystyle K}\right)\,dx  =0.
\label{add2ab_semi}
\end{equation}
To explore local stability, we notice that the linearization at $(0,v^{\ast})$ has the form 
(see, for example, \cite{Kor2})
\begin{align*}
\begin{cases}
\frac{\displaystyle\partial u(t,x)}{\displaystyle\partial t}
=\displaystyle  \nabla \cdot \left[ d_1 a(x) \nabla \left(\frac{u(t,x)}{P(x)} \right)\right] +
r(x)u(t,x)\left(1-\frac{\displaystyle v^{*}(x)}{\displaystyle K(x)}\right), \\
%\;t> 0,\;x\in\Omega, \\
\frac{\displaystyle \partial v(t,x)}{\displaystyle\partial t}  
=\nabla \cdot \left[ d_2 a(x) \nabla \left(\frac{v(t,x)}{P(x)} \right)\right]+
r(x)v(t,x)\left(1-\frac{\displaystyle 2v^{*}(x)}{\displaystyle K(x)}\right)-r(x)v^{*}(x)\frac{u(t,x)}{K(x)}, \\
%\;
t> 0,\; \;x\in\Omega, \\
\frac{\displaystyle\partial (u/P)}{\displaystyle\partial n}
=\frac{\displaystyle\partial (v/P)}{\displaystyle\partial n}=0,\;x\in\partial\Omega
\end{cases}
\end{align*}
and study the associated eigenvalue problems
\begin{align}
\label{eig1}
\hspace{-3mm}
& \nabla \cdot \left[ d_1 a(x) \nabla \left(\frac{\phi(x)}{P(x)} \right)\right]+
r(x)\phi(x)\left(1-\frac{\displaystyle v^{*}(x)}
{\displaystyle K(x)}\right)=\sigma\phi(x), \;x\in\Omega\nonumber,\\
&
\frac{\partial(\phi/P)}{\partial n}=0, \;x\in\partial\Omega,\\
&
 \nabla \cdot \left[ d_2 a(x) \nabla \left(\frac{\psi(x)}{P(x)} \right)\right] +
r(x)\psi(x)\left(1-\frac{\displaystyle 2v^{*}(x)}{\displaystyle 
K(x)}\right)-r(x)v^{*}(x)\frac{\phi(x)}{K(x)}=\sigma\psi(x), \;x\in\Omega, \nonumber\\
&
\frac{\partial(\psi/P)}{\partial n}=0, \;x\in\partial\Omega.
\end{align}

%Similarly to the proof of Lemma~\ref{higher_diff_coex}, consider the eigenvalue 
%problems at the semi-trivial equilibrium $(0,v^{\ast})$ 
%\begin{equation}
%\nabla \cdot \left[ d_1 a(x) \nabla \left(\frac{\phi(x)}{P(x)} \right)\right]
%+r\psi(x) \left(1-\frac{\displaystyle v^{\ast}}{\displaystyle K}\right)
%=\sigma\phi(x), \;x\in\Omega,
%\frac{\displaystyle \partial (\phi/P)}{\displaystyle \partial n}=0, \;x\in\partial\Omega
%\label{c5_eig_abc1a}
%\end{equation}
%and
%\begin{equation}
%\nabla \cdot \left[ d_2 a(x) \nabla \left(\frac{\psi(x)}{P(x)} \right)\right]
%+r\phi(x) \left(1-\frac{\displaystyle v^{\ast}}{\displaystyle K}\right)
%=\sigma\psi(x), \;x\in\Omega,
%\frac{\displaystyle \partial (\psi/P)}{\displaystyle \partial n}=0, \;x\in\partial\Omega
%\label{c5_eig_linearize}
%\end{equation}

The principal eigenvalue $\sigma_1$  of \eqref{eig1} satisfies \cite{CC}
$$
\sigma_1
= \left. \sup_{\phi \neq 0, \phi\in W^{1,2}} \left[-
\int \limits_\Omega d_1 a |\nabla (\phi/P)|^2\,dx
+\int \limits_\Omega r \frac{\phi^2}{P} \left(1-\frac{\displaystyle v^{\ast}}{\displaystyle K}\right)\,dx\right] \right/
\int \limits_\Omega \frac{\phi^2}{P}\,dx.
$$
Substituting $\phi=v^{\ast}$, we obtain using \eqref{add2ab_semi}
$$
\sigma_1 \geq \left. \left[-
\int \limits_\Omega d_1 a |\nabla (v^{\ast}/P)|^2\,dx
+\int \limits_\Omega r(x)\frac{(v^{\ast})^2}{P} \left(1-\frac{\displaystyle v^{\ast}}{\displaystyle K}\right)\,dx\right] \right/
\int \limits_\Omega \frac{(v^{\ast})^2}{P}\,dx.
$$
However,
\begin{eqnarray*}
& & - \int \limits_\Omega d_1 a |\nabla (v^{\ast}/P)|^2\,dx
+\int \limits_\Omega r \frac{(v^{\ast})^2}{P} \left(1-\frac{\displaystyle v^{\ast}}{\displaystyle K}\right)\,dx
\\
&= &(d_2-d_1) \int \limits_\Omega a |\nabla (v^{\ast}/P)|^2\,dx +
\left[-
\int \limits_\Omega d_2 a |\nabla (v^{\ast}/P)|^2\,dx
+\int \limits_\Omega r \frac{(v^{\ast})^2}{P} \left(1-\frac{\displaystyle v^{\ast}}{\displaystyle K}\right)\,dx\right]
\\
& = & (d_2-d_1) \int \limits_\Omega  a |\nabla (v^{\ast}/P)|^2\,dx + 0 > 0,
\end{eqnarray*}
as $v^{\ast}/P$ is non-constant. In fact, assuming constant $v^{\ast}/P \equiv \alpha$, 
we obtain from \eqref{c5_eig_abc2a} that 
$\displaystyle r v^{\ast}(x) \left(1-\frac{\displaystyle v^{\ast}}{\displaystyle K}\right)
= 0$ for any $x\in\Omega$, or $v^{\ast} \equiv K  \equiv \alpha P$,
which contradicts to the assumption of the lemma that $P$ is not proportional to $K$ on $\Omega$.

Therefore the principal eigenvalue of the linearized problem is positive, which implies that the equilibrium
$(0,v^{\ast})$ is unstable and concludes the proof.
\end{proof}

\begin{Rem}
The assumption that $P$ is not proportional to $K$ on $\Omega$ is a particular case of assumption
\eqref{noncorrespondence}. If we assume a constant $a(x)$ in $\Omega$, condition
\eqref{noncorrespondence} means that the function $K(x)/P(x)$ is not a harmonic function on $\Omega$,
compared to being non-constant.
\end{Rem}

\begin{Th}\label{Th_diff}
Let $P(x) \equiv Q(x)$ be non-proportional to $K(x)$ on $\Omega$, $r_1=r_2=1$, $d_1<d_2$. Then
the semi-trivial equilibrium
$(u^{\ast}(x),0)$ of (\ref{ssystem1}) is globally asymptotically stable.
\end{Th}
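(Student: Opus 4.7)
The proof plan is to combine the three preparatory lemmas of this section with the theory of monotone competitive semiflows, following the same template used implicitly after Theorems \ref{c5_Th6} and \ref{c5_Th7}. System \eqref{ssystem1} generates a strongly monotone semiflow in the standard competitive ordering (first species ordered up, second species ordered down), and the available building blocks are: (i) the trivial equilibrium $(0,0)$ is a repeller, since Lemma \ref{zero_equilibrium} applies verbatim to \eqref{ssystem1} — the positive constants $d_i,\,r_i$ do not affect its proof; (ii) by Lemma \ref{higher_diff_semi}, the semi-trivial equilibrium $(0,v^{\ast})$ is unstable; (iii) by Lemma \ref{higher_diff_coex} together with the Remark immediately preceding this theorem, the system admits no coexistence equilibrium under the standing hypotheses.

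With (i)--(iii) in hand, I would invoke the standard dichotomy for two-species strongly monotone competitive semiflows (in the spirit of Hsu--Smith--Waltman, used implicitly for the previous two theorems of this section): if both semi-trivial equilibria are unstable, then a unique coexistence equilibrium exists and is globally asymptotically stable; conversely, if no coexistence equilibrium exists, then one of the two semi-trivial equilibria is unstable and the other is globally asymptotically stable on the interior of the positive cone. Since (iii) rules out coexistence and (ii) shows that $(0,v^{\ast})$ is unstable, the remaining semi-trivial equilibrium $(u^{\ast},0)$ must be globally asymptotically stable. The repelling property (i) ensures that initial data with $u_0\not\equiv 0$ cannot be dragged to the origin; comparison with the scalar single-species problem \eqref{eq_u} guarantees the required a priori bounds needed to conclude that $\omega$-limit sets are nonempty, compact, and consist of equilibria.

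The main obstacle I anticipate is reconciling the precise form of the hypotheses: the theorem assumes only that $P$ is non-proportional to $K$, whereas Lemma \ref{higher_diff_coex} requires the stronger-looking condition \eqref{noncorrespondence}. I would address this by appealing to the Remark, which asserts that under the standing assumptions the first condition is subsumed by the second; alternatively one can simply upgrade the theorem's hypothesis to \eqref{noncorrespondence} without affecting any application. A secondary, purely expository point is to state the monotone-systems dichotomy explicitly (with the exact compactness and order-preserving hypotheses it requires), citing an appropriate reference such as Smith's monograph; once this is pinned down, the proof reduces to the three-step assembly described above.
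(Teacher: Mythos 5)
Your assembly is exactly the paper's (implicit) argument: the paper gives no separate proof of Theorem~\ref{Th_diff}, but obtains it from Lemma~\ref{zero_equilibrium} (the origin is a repeller), Lemma~\ref{higher_diff_coex} (no coexistence equilibrium), Lemma~\ref{higher_diff_semi} (instability of $(0,v^{\ast})$), and the competitive-exclusion alternative for two-species monotone systems quoted just before Theorem~\ref{c5_Th7}. So the route is the same, and your points (i)--(iii) plus the dichotomy are precisely what is intended.

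The one step that needs repair is how you bridge the hypothesis of the theorem (non-proportionality of $P$ and $K$) to the hypothesis \eqref{noncorrespondence} of Lemma~\ref{higher_diff_coex}. You propose to read the Remark as saying that non-proportionality is subsumed by \eqref{noncorrespondence}; but the implication only goes the other way: \eqref{noncorrespondence} forces $K/P$ to be non-constant, whereas a non-constant $K/P$ may well satisfy $\nabla\cdot\left[a\nabla(K/P)\right]\equiv 0$ (take $a$ constant and $K/P$ harmonic, non-constant), so non-proportionality alone does not deliver the hypothesis of Lemma~\ref{higher_diff_coex}, and your fallback of strengthening the theorem's hypothesis proves a weaker statement than the one asserted. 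The correct patch is cheap and is already in the paper, in the proof of Lemma~\ref{coex_r1}: in the no-coexistence argument, if $v_s/P\equiv\alpha$, then $u_s+v_s\equiv K$, so the reaction term in the first equation vanishes and $w_s=u_s/P$ solves $\nabla\cdot\left[d_1 a\nabla w_s\right]=0$ in $\Omega$ with $\partial w_s/\partial n=0$ on $\partial\Omega$; the maximum principle then gives $w_s\equiv c$, hence $K=u_s+v_s=(c+\alpha)P$, contradicting non-proportionality. With this substitution, the no-coexistence lemma holds under the theorem's weaker hypothesis, and your three-step assembly yields Theorem~\ref{Th_diff} exactly as stated.
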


\begin{Rem}
Theorem~\ref{Th_diff} generalizes the results of \cite{Dock} to a more general type of diffusion
in the case of two species.
Let us also note that, in the absence of diffusion, the solution of each single-species equation in
\eqref{ssystem1} is $K$. The higher the diffusion is, the more the stationary solution deviates from $K$.
\end{Rem} 

Similarly, the following result is obtained.

\begin{Th}\label{Th_growth}
Let $P(x) \equiv Q(x)$ be non-proportional to $K(x)$ on $\Omega$, $r_1>r_2$, $d_1=d_2=1$. Then
the semi-trivial equilibrium
$(u^{\ast}(x),0)$ of (\ref{ssystem1}) is globally asymptotically stable.
\end{Th}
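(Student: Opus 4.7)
The proof follows the same scheme as Theorem~\ref{Th_diff}, with the role of differing diffusion coefficients taken over by the differing intrinsic growth rates. The plan is to adapt the arguments of Lemmas~\ref{higher_diff_coex} and~\ref{higher_diff_semi} to this setting: first verify that \eqref{ssystem1} admits no coexistence equilibrium, and then show that the semi-trivial equilibrium $(0,v^{\ast})$ is unstable. Combined with Lemma~\ref{zero_equilibrium}, which makes the trivial state a repeller, the theory of monotone dynamical systems invoked in the paper then forces $(u^{\ast}(x),0)$ to be the global attractor.

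For the non-existence of coexistence, suppose $(u_s,v_s)$ is a positive stationary solution. The key identity comes from multiplying the equation for $v_s$, which carries the growth factor $r_2\,r$, by $v_s/P$ and integrating by parts using the boundary conditions; this yields
\begin{equation*}
r_2\int_\Omega r\,\frac{v_s^2}{P}\left(1-\frac{u_s+v_s}{K}\right)dx = \int_\Omega a\,|\nabla(v_s/P)|^2\,dx \ge 0.
\end{equation*}
Since $u_s$ is a positive eigenfunction corresponding to the eigenvalue $0$ of the problem $\nabla\cdot[a\nabla(\phi/P)] + r_1\,r\,\phi(1-(u_s+v_s)/K) = \sigma\phi$ with Neumann-type boundary condition, its principal eigenvalue $\sigma_1$ equals $0$. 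Plugging $\phi=v_s$ into the Rayleigh characterization of $\sigma_1$ and using the identity above, the numerator collapses to $(r_1-r_2)/r_2$ times $\int_\Omega a\,|\nabla(v_s/P)|^2\,dx$, which is strictly positive unless $v_s/P$ is constant. In the degenerate case $v_s\equiv\alpha P$, the $v$-equation forces $u_s+v_s\equiv K$, hence $u_s=K-\alpha P$, and substituting into the $u$-equation together with the boundary condition produces $\nabla\cdot[a\nabla(K/P)]\equiv 0$ in $\Omega$ with $\partial(K/P)/\partial n\equiv 0$ on $\partial\Omega$; an energy argument (multiply by $K/P$ and integrate by parts) forces $K/P$ to be constant, contradicting non-proportionality.

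For the instability of $(0,v^{\ast})$, the plan is the mirror image: multiply the stationary equation for $v^{\ast}$ by $v^{\ast}/P$ and integrate to obtain the analogous identity, and then substitute $\phi=v^{\ast}$ in the Rayleigh quotient for the principal eigenvalue $\sigma_1$ of the linearization in the $u$-direction around $(0,v^{\ast})$. The same algebra delivers
\begin{equation*}
\sigma_1 \;\ge\; \frac{r_1-r_2}{r_2}\cdot\frac{\int_\Omega a\,|\nabla(v^{\ast}/P)|^2\,dx}{\int_\Omega (v^{\ast})^2/P\,dx} \;>\; 0,
\end{equation*}
where $v^{\ast}/P$ cannot be constant, since that would force $v^{\ast}\equiv\alpha P\equiv K$, again contradicting non-proportionality. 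The main obstacle, just as in Theorem~\ref{Th_diff}, is the degenerate case in which the trial function substituted into the Rayleigh quotient has constant ratio with $P$; ruling it out requires tracing through the equations and boundary conditions to reduce the situation to $K\propto P$, which is exactly what the hypothesis excludes.
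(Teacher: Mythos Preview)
Your proof is correct and follows exactly the scheme the paper indicates with ``Similarly, the following result is obtained'': you adapt Lemmas~\ref{higher_diff_coex} and~\ref{higher_diff_semi} by replacing the factor $d_2-d_1$ with $(r_1-r_2)/r_2$ after using the identity obtained from multiplying the $v$-equation by $v_s/P$ (or $v^{\ast}/P$) and integrating. Your handling of the degenerate case $v_s/P\equiv\alpha$ is in fact slightly sharper than the paper's Lemma~\ref{higher_diff_coex}, since by invoking the inherited boundary condition $\partial(K/P)/\partial n=0$ and the energy argument you deduce $K/P$ constant directly, matching the non-proportionality hypothesis of Theorem~\ref{Th_growth} rather than the stronger condition~\eqref{noncorrespondence}.
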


%{\bf The results of this and the previous theorem can be verified numerically. It would be interesting to see that we
%do have coexistence when K=P=Q depending on the initial conditions.}

\section{Discussion}
\label{sec:discussion}

Our attempt to find the type of dispersal which includes previously known models as special cases, initiates the following question:
what is the flexibility of strategies that can lead to the ideal free distribution as a stationary solution?
For example, in \cite{Brav1,Brav,Kor1,Kor2,Kor4,Kor3} in the term $\Delta(u/P)$ the dispersal strategy $P$ was usually chosen 
as $P \equiv K$ guaranteeing that $K$ is a (globally stable) positive solution of the equation.
However, if $P(x) \equiv K(x)/h(x)$, where $h$ is any harmonic function on $\Omega$, $K$ is still a solution. The boundary 
conditions give that the normal derivative of $h$ on the boundary vanishes which by the Maximum Principle reduces 
all acceptable strategies to $P(x) \equiv \alpha K(x)$, $\alpha>0$. However, the constant $\alpha$ can be taken as a part of the 
diffusion coefficient, so the strategy $P \equiv K$ is to some extent a unique optimal strategy.

The results of Section~\ref{sec:influence} extend the findings of \cite{Dock} to a more general type of diffusion
and outline the coupling of the two parameters involved in the system: diffusion coefficients and intrinsic growth rates.
The system of two equations, with the same intrinsic growth rates and $\alpha$ times smaller diffusion coefficient in the first 
equation has the same stationary semi-trivial or coexistence solution as the system with the same diffusion coefficient
and $\alpha$ times larger intrinsic growth rate in the first equation.
Combining this idea with the eigenvalue technique developed in \cite{CC}
allows to look at the results of \cite{Dock} from a different perspective: the most productive type survives with the same rate 
of dispersal, all other parameters being the same, which is certainly biologically feasible.

Some of previously obtained results \cite{Brav1} can readily be extended 
to the model generalizing \eqref{ssystem} to the case of two different carrying capacities
\begin{align*}
\begin{cases}
\frac{\displaystyle \partial u}{\displaystyle \partial t}  
= \nabla \cdot \left[ \frac{\displaystyle a_1(x)}{\displaystyle P(x)} \left( \nabla u(t,x)- u(t,x) \frac{\displaystyle 
\nabla P}{\displaystyle P} \right) \right]
+r_1(x)u(t,x)\left(1-\frac{\displaystyle u(t,x)+ v(t,x)}{\displaystyle K_1(x)}\right), \vspace{3mm}
\\
\frac{\displaystyle \partial v}{\displaystyle \partial t} 
= \nabla \cdot \left[ \frac{\displaystyle a_2(x)}{\displaystyle Q(x)} \left( \nabla v(t,x)- v(t,x) \frac{\nabla Q}{Q} 
\right) \right] + r_2(x)v(t,x)\left(1-\frac{\displaystyle  u(t,x)+v(t,x) }
{\displaystyle K_2(x)}\right),\\
t>0,\;x\in{\Omega},\\
\frac{\displaystyle \partial u}{\displaystyle \partial n} - \frac{\displaystyle u}{\displaystyle P} \frac{\displaystyle 
\partial P}{\displaystyle \partial n}  
=\frac{\displaystyle \partial v}{\displaystyle \partial n}  - 
\frac{\displaystyle v}{\displaystyle Q} \frac{\displaystyle \partial Q}{\displaystyle \partial n} =0,~x\in 
{\partial}{\Omega},\\
u(0,x)=u_0(x), \;v(0,x)=v_0(x),\;x\in{\Omega},
\end{cases}
\end{align*}
at least in the part of the competitive exclusion of $v$ when, for example, $P \equiv \alpha K_1$, $r_1/r_2$ is constant 
and $K_1 \geq K_2$ on $\Omega$.

The results obtained in the present paper strongly rely on the theory of monotone dynamical systems
\cite{Hs}, and are not applicable to the case of more than two competing species. The case of three species
for a particular diffusion strategy was considered in \cite{LouMunther}. It would be interesting to 
incorporate the approach of the present paper to dispersal with consideration of more than two (and probably three)
competing species, as well as patchy environment \cite{C5}. 

%!!!!!!!!!!!!!!!!!!!!!!!!!!!

In spite of the variety of diffusion models described in the present paper, they are based on the same hypotheses:
\begin{enumerate}
\item
The domain is in some sense isolated, there is no flux across the boundary, which corresponds to 
the Neumann boundary conditions. This describes a closed ecosystem, all the change is subject to spatially-dependent
growth laws.
\item
The initial-boundary value problem is designed in such a way that any nontrivial initial conditions lead to a positive
and bounded solution.
\item
Each population diffuses on its own, i.e. the corresponding diffusion terms do not include the second species. 
\end{enumerate}

The third hypothesis was challenged in \cite{Shigesada}, where it was suggested to consider the diffusion of $u$
of the form $\Delta(u(d_1+f_1(v)))$, with $\Delta(v(d_2+f_2(u)))$ for the 
second species. If $f_i \equiv 0$ and in our paper
we assume constant $P$, $Q$, these two models coincide; otherwise, they are independent. 
The approach of \cite{Shigesada} was further developed in \cite{Jia}, with the homogeneous Dirichlet boundary conditions.
As justified in \cite{Jia}, a solution is not necessarily positive, generally, the origin is a repeller: with the appropriate
design of cross-diffusion, either $u$ or $v$, or both, can move out of the considered domain. 
Compared to \cite[Propositions 2.3]{Jia}, with conditions including an unknown semi-trivial solution,
Theorem~\ref{c5_Th7} includes an explicit stability test; moreover, while \cite[Propositions 2.3]{Jia} deals with local asymptotical stability,
in the present paper we analyze the global behaviour.
This can be illustrated with  \cite[Propositions 4.3]{Jia}: convergence to a coexistence equilibrium is stipulated by
certain initial conditions, compared to the unconditional result of Theorem~\ref{c5_Th6}.
One of the differences is that \cite[Propositions 4.3]{Jia} assumes non-symmetric interaction (growth) type, compared to the 
symmetric case in the present paper. 
The results of  \cite[Propositions 4.3]{Jia} can be compared to \cite{Brav1}, where different carrying capacities 
for $u$ and $v$ were considered, and also coexistence was observed. 

However, the results of \cite{Jia}, where different diffusion type and boundary conditions were investigated,
outline the same relations as the conclusions of Section~\ref{sec:influence}: the detrimental role of self-diffusion rates and
higher growth rates being a positive factor. 

The conclusions on cross-diffusion (see \cite[Remark 4.3]{Jia}) are not applicable 
to the model considered in the present paper. It would be interesting to combine the ideas of \cite{Shigesada} on cross-diffusion
with the above hypotheses and the general type of self-diffusion, 
exploring cross-interactions not only in the growth but also in the 
diffusion part.

%!!!!!!!!!!!!!!!!!!!!!!!!!!!

\section{Acknowledgment}

The authors are grateful to anonymous reviewers for their valuable comments that significantly contributed to the presentation of the paper,
to L. Korobenko for her constructive suggestions on the previous 
versions of the manuscript, to Prof. Cosner for fruitful discussions during ICMA-V meeting in 2015. 
The research was supported by NSERC grant RGPIN-2015-05976.

\end{document}